\documentclass[11pt]{article}
\usepackage[margin=1in]{geometry}
\usepackage[T1]{fontenc}
\usepackage{lmodern}
\usepackage{amsmath,amssymb,amsthm,mathtools}
\usepackage{microtype}
\usepackage[colorlinks=true,linkcolor=blue,citecolor=blue,urlcolor=blue]{hyperref}

\newcommand{\R}{\mathbb R}
\newcommand{\Sph}{\mathbb S}
\newcommand{\Var}{\operatorname{Var}}
\newcommand{\Ent}{\operatorname{Ent}}
\newcommand{\diag}{\operatorname{diag}}
\newcommand{\Id}{\mathrm{Id}}
\newcommand{\ip}[2]{\langle #1,#2\rangle}
\newcommand{\gradS}{\nabla_{\!S}}
\newcommand{\HessS}{\nabla_{\!S}^{2}}
\theoremstyle{plain}
\newtheorem{theorem}{Theorem}[section]
\newtheorem{proposition}[theorem]{Proposition}
\newtheorem{lemma}[theorem]{Lemma}
\newtheorem{corollary}[theorem]{Corollary}
\theoremstyle{remark}
\newtheorem{remark}[theorem]{Remark}
\newtheorem{example}[theorem]{Example}

\title{Blaschke--Santal\'o inequalities with respect to costs}
\author{A.~V.~Kolesnikov, Galyna V.~Livshyts and Liran Rotem}
\date{Revised October 8, 2026}

\begin{document}
\maketitle

\begin{abstract}
We study functional Blaschke--Santal\'o inequalities in which the inner product
is replaced by a cost. An inequality on each orthant, combined with Steiner
symmetrization, gives a sufficient condition for a cost inequality to hold for
even convex functions. We apply this principle to a multiplicative cost
defined on all orthants. Its second variation gives a weighted improvement
of the Brascamp--Lieb inequality in the directions for which differentiation
is justified. We also prove an $L^p$ version of the Fradelizi--Meyer theorem
from a weighted volume-product inequality and derive the corresponding radial
cost and Brascamp--Lieb inequalities. Explicit examples include anisotropic
power and Cauchy densities, with dilation equality cases and a sharp
obstruction below the coordinate exponent two. For the spherical
logarithmic cost we compute an exact entropy--logdet deficit decomposition
and the second-order deficit transverse to ellipsoids. We also explain the
transport-entropy formulation and distinguish the restricted spectral
question from the scalar $(B)$ property.
\end{abstract}

\section{Introduction and main results}

The Blaschke--Santal\'o inequality states that every origin-symmetric convex
body $K\subset\R^n$ satisfies
\begin{equation}\label{BS-bodies}
 |K|\,|K^\circ|\le |B_2^n|^2,
\end{equation}
where
\[
 K^\circ=\{y:\ \ip{x}{y}\le1\text{ for every }x\in K\}
\]
is the polar body. The inequality was proved by Blaschke in dimensions two
and three and by Santal\'o in arbitrary dimension
\cite{Blaschke,Santalo}.

Equality holds precisely for ellipsoids. The volume product is unchanged
under invertible linear maps, and this affine invariance is an important
feature of the classical inequality. Symmetrization proofs go back to
Saint Raymond \cite{SaintRaymond}; Meyer and Pajor \cite{MeyerPajor}
developed the Steiner symmetrization approach. Campi and Gronchi
\cite{CampiGronchi} and Meyer and Reisner \cite{MeyerReisner} studied the
behavior of polar volumes along shadow systems, giving a broader setting
for these arguments. Bianchi and Kelly \cite{BianchiKelly} later gave a
Fourier-analytic proof. For an account of the volume product, its history,
and its connections with other parts of mathematics, we refer to the
survey of Fradelizi, Meyer and Zvavitch \cite{FMZ}.

Ball \cite{Ball} discovered the functional inequality
\begin{equation}\label{BSBall-G}
 \int_{\R^n}e^{-\phi(x)}\,dx
 \int_{\R^n}e^{-\phi^*(y)}\,dy\le(2\pi)^n,
\end{equation}
where $\phi$ is even and
\[
 \phi^*(y)=\sup_x\{\ip{x}{y}-\phi(x)\}.
\]
Here and below, integral products are considered when their factors are
positive and finite; cases with a zero factor are understood separately.
Among lower semicontinuous convex functions, equality in
\eqref{BSBall-G} holds precisely for
$\phi(x)=a+\frac12\ip{Ax}{x}$, with $A$ positive definite.

Artstein-Avidan, Klartag and Milman \cite{AKM} introduced a Santal\'o
point for functions and extended the functional inequality beyond the
even case. Fradelizi and Meyer \cite{FM} obtained functional forms with
radial reference functions and several choices of centering. Their
results are particularly useful when one wants to replace the Gaussian
by another radial density. Lehec gave a direct proof by induction on the
dimension \cite{LehecDirect}, as well as a proof using partitions and the
logarithmic form of the Pr\'ekopa--Leindler inequality
\cite{LehecPartitions,Prekopa,Leindler}. Lin and Leng \cite{LinLeng}
extended the Steiner symmetrization proof of Meyer and Pajor to
functions.

Quantitative versions were obtained by B\"or\"oczky \cite{Bor} for convex
bodies and by Barthe, B\"or\"oczky and Fradelizi \cite{BBF} for the
functional inequalities. These results describe how a small deficit
forces a body or a function to be close, in an appropriate sense, to an
equality case. The reverse problem has also been studied in the
functional setting. Fradelizi and Meyer \cite{FMInverse,FMUnconditional}
proved inverse inequalities, including sharp results for unconditional
functions, and related them to Mahler's conjecture. Recent OpenAI
preprints announce general resolutions of the geometric Mahler problems;
see the dated discussion below. Our results concern upper bounds and
do not use these announced reverse inequalities.

There are several $L^p$ extensions of the Blaschke--Santal\'o inequality.
Lutwak and Zhang \cite{LutwakZhang} obtained inequalities involving
$L^p$ centroid bodies. More recently, Berndtsson, Mastrantonis and
Rubinstein \cite{BMR} introduced a different $L^p$ polarity based on the
Laplace transform and proved a Santal\'o inequality for symmetric convex
bodies. Their polarity approaches the usual one as $p\to\infty$.
Mastrantonis \cite{Mastrantonis} removed the symmetry assumption by
translating the body to its $L^p$ Santal\'o point.

A further approach comes from heat flow. Nakamura and Tsuji
\cite{NakamuraTsuji} proved that the functional volume product of an even
function is nondecreasing along the Fokker--Planck flow. Their argument
also gives a sharp Laplace transform inequality and an improvement of
reverse hypercontractivity for even functions. Cordero-Erausquin, Gozlan,
Nakamura and Tsuji \cite{CGNT} studied the interaction between Legendre
duality and heat flow, and showed that centered Gaussians are the only
local maximizers among even log-concave functions. In their recent paper,
Cordero-Erausquin, Fradelizi and Langharst \cite{CFL} extend the
Nakamura--Tsuji Laplace transform inequality to functions which need not
be even. They introduce a suitable Santal\'o point through minimization
over translations in a double Laplace transform, and use the
Fokker--Planck flow to prove the resulting inequality. Thus centering
continues to play an essential role in these extensions.

Optimal transport gives another interpretation of functional
Santal\'o inequalities. Fathi \cite{Fathi} obtained a sharp symmetrized
form of Talagrand's Gaussian transport-entropy inequality by dualizing
the functional Santal\'o inequality. Fradelizi, Gozlan, Sadovsky and
Zugmeyer \cite{FGSZ} developed this connection for further costs and
reference measures, including the spherical logarithmic cost considered
below. Kolesnikov and Werner \cite{KolesnikovWerner} introduced versions
with several functions and related them to Wasserstein barycenters.
Nakamura and Tsuji \cite{NakamuraTsujiDuality} proved the exponential-kernel
case of their conjectured inequality for even functions, through a
Gaussian extremality result for a generalized duality relation. They
also established the associated transport-entropy inequality for
barycenters of even measures.

Weighted volume products are especially close to the questions considered
here. For unconditional log-concave measures, the set inequality is
discussed in \cite{FMZ} and revisited by Milman and Yehudayoff \cite{MY},
who give a detailed account of the reduction from symmetric bodies to
unconditional ones. Colesanti, Kolesnikov, Livshyts and Rotem \cite{CKLR}
proved weighted functional Blaschke--Santal\'o inequalities and used
their variations to obtain strengthened Brascamp--Lieb inequalities.
Their symmetrization argument is one of the starting points of this
paper.

Langharst, Malliaris and Roysdon \cite{LMR} develop a general framework
of cost-Santal\'o inequalities on Polish measure spaces. Their
transference principle, \cite[Lemma~1.2]{LMR}, passes from inequalities
for sets to functional inequalities, and their Section~7 treats transport.
Their convention assigns the cost $-\ip{x}{y}$ to classical polarity;
our supremum convention below assigns $\ip{x}{y}$. The present paper
focuses on a coordinate Steiner criterion, explicit self-dual costs and
their second-variation matrices. In the homogeneous setting, our radial
argument also permits arbitrary measurable profiles $\rho$, whereas
the abstract transference lemma just cited assumes a nonincreasing
profile. We make the reverse indicator-function implication explicit in
Corollary~\ref{cor:FM-equivalence}.

Our goal is to replace the inner product by a cost $c(x,y)$. We use the convention
\begin{equation}\label{eq:def-c-transform}
 \phi^c(y)=\sup_x\{c(x,y)-\phi(x)\}.
\end{equation}
The supremum is always taken in the first variable. The cost may take the
value $-\infty$, in which case that pair contributes nothing to the
supremum.

Fix $p>1$. If $x$ and $y$ belong to the same open orthant, write
\[
 x^{1/p}y^{(p-1)/p}
 =\bigl(\operatorname{sgn}(x_i)|x_i|^{1/p}|y_i|^{(p-1)/p}\bigr)_{i=1}^n.
\]
Our first assumption is the lower bound
\begin{equation}\label{eq:cost-lower}
 c(x,y)\ge pV\bigl(x^{1/p}y^{(p-1)/p}\bigr)
 \qquad\text{when $x,y$ belong to the same orthant}.
\end{equation}
This gives the desired integral inequality when $e^{-\phi}$ has equal mass
in all orthants. To pass to even convex functions, we use a condition on the
cost which makes the dual integral increase under Steiner symmetrization.

For each coordinate hyperplane $H$, write $x=(X,s)$ and $y=(Y,t)$, with
$X,Y\in H$. The condition is
\begin{align}
 &c\left(\left(X,\frac{s_2-s_1}{2}\right),
              \left(\frac{Y_1+Y_2}{2},t\right)\right)\notag\\
 &\hspace{1cm}\le
 \frac12c((X,s_2),(Y_1,t))
 +\frac12c((X,s_1),(Y_2,-t)).\label{eq:SC}
\end{align}
It is required for all the displayed variables, including pairs in different
orthants. We refer to \eqref{eq:SC} as the Steiner condition.

\begin{theorem}\label{thm:main-cost}
Let $p>1$ and let $V:\R^n\to\R$ be measurable and unconditional, with
$0<\int e^{-V}<\infty$. Suppose that $c$ satisfies
\eqref{eq:cost-lower}, the Steiner condition in every coordinate, and
\[
 c(-x,-y)=c(x,y).
\]
Then every even lower semicontinuous proper convex coercive function $\phi$
with $\int e^{-\phi}>0$ satisfies
\begin{equation}\label{eq:main-cost}
 \left(\int e^{-\phi}\right)^{1/(p-1)}
 \int e^{-\phi^c/(p-1)}
 \le\left(\int e^{-V}\right)^{p/(p-1)}.
\end{equation}
The costs and transforms are assumed measurable.
\end{theorem}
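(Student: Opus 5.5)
The plan is to reduce the general even case to the unconditional case by Steiner symmetrization in each coordinate, and then prove the unconditional case orthant by orthant with a multiplicative Pr\'ekopa--Leindler inequality. Write $A=\int e^{-\phi}$, $W=\int e^{-V}$ and $B(\phi)=\int e^{-\phi^c/(p-1)}$. The goal is $B(\phi)\le W^{p/(p-1)}A^{-1/(p-1)}$.

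\emph{Step 1: the orthant inequality.} Fix an open orthant $Q$; by reflecting coordinates we may take $Q=(0,\infty)^n$. For $x,y\in Q$, the definition \eqref{eq:def-c-transform} together with \eqref{eq:cost-lower} gives
\[
 V\bigl(x^{1/p}y^{(p-1)/p}\bigr)\le \tfrac1p\,\phi(x)+\tfrac{p-1}{p}\cdot\frac{\phi^c(y)}{p-1}.
\]
We substitute $x=e^u$, $y=e^v$, $z=e^w$ coordinatewise, and set
\[
 F(u)=e^{-\phi(e^u)+\sum u_i},\qquad G(v)=e^{-\phi^c(e^v)/(p-1)+\sum v_i},\qquad H(w)=e^{-V(e^w)+\sum w_i}.
\]
If $w=u/p+(p-1)v/p$, then $\sum w_i$ is the same convex combination of $\sum u_i$ and $\sum v_i$, so the pointwise bound above becomes $H(w)\ge F(u)^{1/p}G(v)^{(p-1)/p}$. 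Pr\'ekopa--Leindler with weights $1/p$ and $(p-1)/p$ then gives
\[
 \int_Q e^{-V}\ \ge\ \Bigl(\int_Q e^{-\phi}\Bigr)^{1/p}\Bigl(\int_Q e^{-\phi^c/(p-1)}\Bigr)^{(p-1)/p}.
\]
Now suppose $\phi$ is unconditional, so $\int_Q e^{-\phi}=A2^{-n}$ for every $Q$; also $\int_Q e^{-V}=W2^{-n}$ since $V$ is unconditional. Each orthant then satisfies
\[
 \int_Q e^{-\phi^c/(p-1)}\le 2^{-n}\,W^{p/(p-1)}A^{-1/(p-1)}.
\]
Summing over the $2^n$ orthants gives \eqref{eq:main-cost}; coordinate hyperplanes are null sets. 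This step does not need $\phi^c$ to be unconditional.

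\emph{Step 2: Steiner symmetrization does not decrease $B$.} Fix a coordinate hyperplane $H$ and let $\phi_S$ be the functional Steiner symmetral in the normal direction, in the sense of Lin--Leng: each level set $\{\phi\le\lambda\}$ is replaced by its Steiner symmetral. Then $\phi_S$ is convex, lower semicontinuous, coercive and even, and $\int e^{-\phi_S}=\int e^{-\phi}$.

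The key pointwise claim is
\[
 (\phi_S)^c\Bigl(\tfrac{Y_1+Y_2}2,t\Bigr)\le\tfrac12\phi^c(Y_1,t)+\tfrac12\phi^c(Y_2,-t).
\]
To see this, take a point $(X,r)$ with $\lambda=\phi_S(X,r)$ finite. The section of $\{\phi\le\lambda\}$ over $X$ is an interval $[a,b]$, and its symmetral is $[-(b-a)/2,(b-a)/2]$. So we can write $r=(s_2-s_1)/2$ with $s_1,s_2\in[a,b]$, and then $\phi(X,s_i)\le\lambda$. Applying the Steiner condition \eqref{eq:SC} bounds
\[
 c\bigl((X,r),(\tfrac{Y_1+Y_2}2,t)\bigr)-\lambda
\]
by $\frac12[c((X,s_2),(Y_1,t))-\phi(X,s_2)]+\frac12[c((X,s_1),(Y_2,-t))-\phi(X,s_1)]$. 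Taking the supremum over $(X,r)$ proves the claim.

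For each fixed $t$, Pr\'ekopa--Leindler on $H$ with weights $1/2$ then gives
\[
 \int_H e^{-(\phi_S)^c(\cdot,t)/(p-1)}\ge\sqrt{I(t)I(-t)},\qquad I(t)=\int_H e^{-\phi^c(Y,t)/(p-1)}\,dY.
\]
Since $\phi$ is even and $c(-x,-y)=c(x,y)$, the transform $\phi^c$ is even, and the substitution $Y\mapsto -Y$ shows $I(-t)=I(t)$. Integrating in $t$ yields $B(\phi_S)\ge B(\phi)$.

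\emph{Step 3: conclusion.} Symmetrize successively in $e_1,\dots,e_n$. Symmetrization in $e_j$ makes the function even in $x_j$. It also preserves evenness in any $x_i$ with $i\ne j$, because the reflection $x_i\mapsto -x_i$ maps the sections in direction $e_j$ onto one another. Hence after $n$ steps we obtain an unconditional $\psi$ with $\int e^{-\psi}=A$ and $B(\psi)\ge B(\phi)$, and Step 1 applied to $\psi$ finishes the proof.

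The main obstacle is the technical side of Step 2. One must check that $\phi_S$ is again lower semicontinuous, proper, coercive and convex, and treat sections where the infimum is not attained or $\phi=+\infty$; the level-set representation handles this if we work with closed level sets. One must also check that measurability and the finiteness assumptions on the integrals survive each symmetrization, so that Pr\'ekopa--Leindler applies.
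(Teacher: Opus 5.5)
Your proposal is correct and follows the paper's proof essentially step for step: the orthant-wise multiplicative Pr\'ekopa--Leindler bound (Theorem~\ref{thm:orthant}, which you state only for unconditional $\phi$), then the Steiner-condition estimate $(S_H\phi)^c\bigl(\frac{Y_1+Y_2}{2},t\bigr)\le\frac12\phi^c(Y_1,t)+\frac12\phi^c(Y_2,-t)$ combined with Pr\'ekopa--Leindler on $H$ and the slice symmetry from evenness of $\phi^c$ (Lemma~\ref{lem:steiner}), and finally one pass through all coordinates to reach an unconditional potential. The only cosmetic difference is that you work directly at the level $\lambda=\phi_S(X,r)$, using closedness of the sublevel sets, whereas the paper takes $a\downarrow S_H\phi(X,s)$.
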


The symmetry of the cost ensures that $\phi^c$ is even. More generally, the
proof only needs the integrals of $e^{-\phi^c/(p-1)}$ on the slices $t$ and
$-t$ to agree, at every stage of symmetrization.

We next describe a model to which the theorem applies. For arbitrary $x,y$,
set
\begin{equation}\label{eq:model-cost}
 c_{p,V}(x,y)=pV(z),\qquad
 z_i=
 \begin{cases}
 |x_i|^{1/p}|y_i|^{(p-1)/p},&x_i y_i>0,\\
 0,&x_i y_i\le0.
 \end{cases}
\end{equation}
Thus the geometric product in a coordinate is set to zero when the signs
disagree. On a common orthant, this is exactly the expression in
\eqref{eq:cost-lower}, because $V$ is unconditional.

The following convexity assumption is a convenient way to state the
Steiner condition for this cost. For each $j$, assume that
\begin{equation}\label{eq:power-convexity}
 (u_1,\ldots,u_n)\longmapsto
 V\bigl(u_1^{(p-1)/p},\ldots,u_{j-1}^{(p-1)/p},
        u_j^{1/p},u_{j+1}^{(p-1)/p},\ldots,u_n^{(p-1)/p}\bigr)
\end{equation}
is convex on $[0,\infty)^n$.

\begin{corollary}\label{cor:model-sharp}
Let $V:\R^n\to\R$ be unconditional, convex and coercive, and suppose that
\eqref{eq:power-convexity} holds. Then \eqref{eq:main-cost} holds with the
cost \eqref{eq:model-cost}. Moreover,
\begin{equation}\label{eq:model-self-dual}
 V^{c_{p,V}}=(p-1)V,
\end{equation}
so equality holds for $\phi=V$.
\end{corollary}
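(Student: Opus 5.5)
The plan is to apply Theorem~\ref{thm:main-cost} with $c=c_{p,V}$ and then check \eqref{eq:model-self-dual} directly. A preliminary observation does much of the work. An unconditional convex $V$ is even in each coordinate and convex along coordinate lines. Hence $V$ is nondecreasing in each $|x_i|$ separately on every orthant. Since $V$ is convex and coercive, we also have $0<\int e^{-V}<\infty$.

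\emph{Easy hypotheses of Theorem~\ref{thm:main-cost}.} On a common open orthant the cost satisfies $c_{p,V}(x,y)=pV(x^{1/p}y^{(p-1)/p})$, because $V$ is unconditional. So \eqref{eq:cost-lower} holds with equality. The vector $z$ in \eqref{eq:model-cost} depends only on $|x_i|,|y_i|$ and on whether $x_iy_i>0$. All of these are invariant under $(x,y)\mapsto(-x,-y)$, which gives $c_{p,V}(-x,-y)=c_{p,V}(x,y)$.

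\emph{Self-duality.} Fix $y$. If $x_i$ has the wrong sign, flipping it leaves $V(x)$ unchanged and weakly increases $|z_i|$. By monotonicity this weakly increases $pV(z)$. So it suffices to take the supremum over $x$ in the closed orthant of $y$, and by unconditionality we may assume $x,y\ge0$. Weighted AM--GM gives, coordinatewise,
\[
x_i^{1/p}y_i^{(p-1)/p}\le \tfrac1p x_i+\tfrac{p-1}{p}y_i .
\]
Monotonicity and then convexity of $V$ give
\[
pV(z)\le pV\bigl(\tfrac1p x+\tfrac{p-1}p y\bigr)\le V(x)+(p-1)V(y).
\]
Therefore $V^{c_{p,V}}(y)\le(p-1)V(y)$, and the choice $x=y$ attains equality. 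This proves \eqref{eq:model-self-dual}. Plugging it in, both sides of \eqref{eq:main-cost} equal $(\int e^{-V})^{p/(p-1)}$ for $\phi=V$.

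\emph{Steiner condition (main step).} Fix a coordinate $j$ and write $\sigma=\operatorname{sgn}$, $(r)_+=\max(r,0)$. For $i\ne j$, the $i$-th coordinate of $z$ for a pair $(X,Y)$ has absolute value
\[
\bigl(|X_i|^{1/(p-1)}(\sigma(X_i)Y_i)_+\bigr)^{(p-1)/p}.
\]
The $j$-th coordinate for $(s,t)$ has absolute value
\[
\bigl(|t|^{p-1}(\sigma(t)s)_+\bigr)^{1/p}.
\]
Let $F_j$ denote the map in \eqref{eq:power-convexity}. Then $c_{p,V}=pF_j(u)$, where
\[
u_i=|X_i|^{1/(p-1)}(\sigma(X_i)Y_i)_+ \quad (i\ne j),\qquad u_j=|t|^{p-1}(\sigma(t)s)_+ .
\]
Each $u$-coordinate is a nonnegative multiple, with the multiplier fixed by $X$ or $t$, of a positive part of a linear quantity.

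Now compare the three pairs in \eqref{eq:SC}. The two pairs on the right share $X$ and give vectors $u^1,u^2$. The left pair gives a vector $u^0$. For $i\ne j$, convexity of $r\mapsto r_+$ gives $u^0_i\le\frac12(u^1_i+u^2_i)$. For the $j$-th coordinate, the second right-hand pair $(s_1,-t)$ contributes $(-\sigma(t)s_1)_+$, so
\[
\bigl(\sigma(t)\tfrac{s_2-s_1}{2}\bigr)_+\le\tfrac12(\sigma(t)s_2)_++\tfrac12(-\sigma(t)s_1)_+ ,
\]
that is, $u^0_j\le\frac12(u^1_j+u^2_j)$. The function $F_j$ is nondecreasing in each $u_k$ on $[0,\infty)^n$, because $V$ is and the power maps are increasing. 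Combining this with the assumed convexity of $F_j$ gives
\[
F_j(u^0)\le F_j\bigl(\tfrac{u^1+u^2}{2}\bigr)\le\tfrac12F_j(u^1)+\tfrac12F_j(u^2),
\]
which is \eqref{eq:SC}.

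I expect the main obstacle to be the bookkeeping in this step: signs across different orthants, the coordinates where $X_i=0$ or $t=0$ (where the multiplier vanishes and the inequalities are trivial), and checking that the sign-mismatch convention in \eqref{eq:model-cost} is exactly encoded by the positive parts. With all hypotheses verified, Theorem~\ref{thm:main-cost} yields \eqref{eq:main-cost}. Measurability of $c_{p,V}$ is clear from its construction from the continuous function $V$.
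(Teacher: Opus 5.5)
Your proposal is correct and follows the paper's proof. Self-duality comes from the pointwise bound $c_{p,V}(x,y)\le V(x)+(p-1)V(y)$, via AM--GM, coordinatewise monotonicity and convexity; the Steiner condition comes from convexity and monotonicity of the map in \eqref{eq:power-convexity}, composed with positive parts of linear functions and evaluated at the midpoint of $(s_2,Y_1)$ and $(-s_1,Y_2)$. The only difference is bookkeeping: you absorb the factors $|X_i|^{1/(p-1)}$ and $|t|^{p-1}$ into the variables $u$, which treats every sign of $t$ (including $t=0$) at once, whereas the paper fixes $t>0$, rescales the function, and handles $t<0$ by reflection and $t=0$ by continuity.
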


For example, \eqref{eq:power-convexity} holds for
$V(x)=\frac1r\sum_i|x_i|^r$ whenever
$r\ge\max\{p,p/(p-1)\}$. In particular, it holds when $r=p\ge2$.
It also holds for $V(x)=\|x\|_r^m/m$ with $m\ge r\ge2$, on taking $p=r$.
The argument uses the same reduction from even to unconditional functions
that appears in \cite{CKLR}, applied here to the cost itself.

The second variation of a cost inequality gives a Brascamp--Lieb inequality.
Recall that the usual Brascamp--Lieb inequality \cite{BL} is
\[
 \Var_\mu(f)\le
 \int\ip{(\nabla^2V)^{-1}\nabla f}{\nabla f}\,d\mu,
 \qquad d\mu=\frac{e^{-V}}{\int e^{-V}}\,dx.
\]
The improvement of variance inequalities under evenness is closely
related to stronger concavity properties of measures on symmetric
convex sets. This connection appears in the proof of the Gaussian
$(B)$ conjecture by Cordero-Erausquin, Fradelizi and Maurey \cite{CEFM}.
Cordero-Erausquin and Rotem \cite{CR} obtained sharp weighted inequalities
for rotationally invariant measures, which imply the $(B)$ and
Gardner--Zvavitch conjectures for rotationally invariant log-concave
measures. The cost formulation below gives a way to read the relevant
quadratic form directly from the second derivatives of the cost.

For the multiplicative cost, the matrix which appears instead is
\begin{equation}\label{eq:intro-mult-BL}
 \Var_\mu(f)\le
 \int\left\langle
 \left(\nabla^2V+\diag\left(\frac{\partial_iV}{x_i}\right)\right)^{-1}
 \nabla f,\nabla f\right\rangle\,d\mu.
\end{equation}
Under the hypotheses of Corollary~\ref{cor:model-sharp}, this follows for
even first variations for which the second variation is justified and the
displayed matrix is positive definite. The precise calculation is given in
Section~\ref{sec:differentiation-BL}. An extension to a larger class of test
functions requires approximation in the corresponding quadratic-form norm.
For unconditional functions, \eqref{eq:intro-mult-BL} follows directly from
the ordinary Brascamp--Lieb inequality after a logarithmic change of
variables. The additional term in the matrix is nonnegative, since $V$ is
unconditional and convex.

Our second family of inequalities comes from a volume-product inequality.
It gives the following extension of the Fradelizi--Meyer theorem.

\begin{theorem}\label{thm:intro-FM}
Let $p\ge r\ge2$ and $V(x)=\|x\|_r^p/p$. Let
$\rho:[0,\infty)\to[0,\infty)$ be measurable, and let $f,g$ be nonnegative
integrable functions, with $f$ even. If
\begin{equation}\label{eq:intro-FM-hyp}
 f(x)g(y)^{p-1}\le
 \rho\bigl(\ip{x}{\nabla V(y)}\bigr)^p
 \qquad\text{whenever }\ip{x}{\nabla V(y)}>0,
\end{equation}
then
\begin{equation}\label{eq:intro-FM-conclusion}
 \int f\left(\int g\right)^{p-1}
 \le\left(\int\rho(\|x\|_r^p)\,dx\right)^p.
\end{equation}
\end{theorem}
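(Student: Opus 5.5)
The plan follows the Fradelizi--Meyer scheme: first a geometric set inequality for the polarity induced by $\nabla V$, then a one-dimensional lemma, then Ball-type bodies to glue the two. The key structural fact is homogeneity. The pairing $\ip{x}{\nabla V(y)}$ is linear in $x$ and $(p-1)$-homogeneous in $y$, since $\nabla V$ is $(p-1)$-homogeneous for $V=\|\cdot\|_r^p/p$. The hypothesis \eqref{eq:intro-FM-hyp} is therefore compatible with radial dilations $x\mapsto ax$, $y\mapsto by$, provided the profile is evaluated at $ab^{p-1}\ip{x}{\nabla V(y)}$.

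\emph{Step 1 (set inequality).} For an origin-symmetric star body $K$, define
\[
K^{V}=\{y:\ \ip{x}{\nabla V(y)}\le1\ \text{for all }x\in K\}.
\]
I will prove
\[
|K|\,|K^{V}|^{p-1}\le|B_r^n|^p,
\qquad B_r^n=\{\|x\|_r\le1\}.
\]
Since $K^V$ depends only on the convex hull of $K$, it suffices to treat symmetric convex $K$.
\begin{itemize}
\item Apply Corollary~\ref{cor:model-sharp}, whose hypothesis \eqref{eq:power-convexity} holds for $V=\|x\|_r^p/p$ because $p\ge r\ge2$, to $\phi=\|x\|_K^p/p$.
\item By homogeneity, $\int e^{-\phi}=\Gamma(1+n/p)\,p^{n/p}|K|$.
\item By Hölder, the multiplicative cost satisfies $c_{p,V}(x,y)\le\ip{x}{\nabla V(y)}\cdot(\text{const})+(\text{const})V(y)$, with equality on the diagonal direction. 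Hence $\phi^{c_{p,V}}$ dominates a $p/(p-1)$-homogeneous gauge of $K^{V}$, and $\int e^{-\phi^c/(p-1)}$ is bounded below by a constant multiple of $|K^V|$.
\end{itemize}
The constants must match exactly those produced by $\phi=V$, which is an equality case by \eqref{eq:model-self-dual}. Checking that the Hölder step is sharp along the direction of $\nabla V(y)$ is the computational core of this step.

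\emph{Step 2 (one-dimensional lemma).} Let $F,G,R:(0,\infty)\to[0,\infty)$ be measurable with $F(a)G(b)^{p-1}\le R(ab^{p-1})^p$. I will show
\[
\int_0^\infty F(a)a^{n-1}\,da\left(\int_0^\infty G(b)b^{n-1}\,db\right)^{p-1}
\le C_{n,p}\left(\int_0^\infty R(t)\,t^{n/p-1}\,dt\right)^p .
\]
Substituting $a=e^{u}$ and $b=e^{v}$ turns the constraint into $u+(p-1)v=w$. The claim then follows from the multiplicative Prékopa--Leindler inequality on $\R$ with weights $1/p$ and $(p-1)/p$. Because the inequality is applied level by level, no monotonicity of $R$ is needed, and this is exactly why arbitrary measurable $\rho$ are allowed.

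\emph{Step 3 (gluing).} Following Ball, define star bodies $K_f$ and $L_g$ by their radial functions:
\[
r_{K_f}(\theta)^n=n\int_0^\infty f(s\theta)s^{n-1}\,ds,
\qquad
r_{L_g}(\theta)^n=n\int_0^\infty g(s\theta)s^{n-1}\,ds .
\]
Then $|K_f|=\int f$ and $|L_g|=\int g$, and $K_f$ is symmetric because $f$ is even. Fix $\theta,\eta$ with $\ip{\theta}{\nabla V(\eta)}>0$. Apply Step 2 to $F(a)=f(a\theta)$, $G(b)=g(b\eta)$ and $R(t)=\rho(t\ip{\theta}{\nabla V(\eta)})$. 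Rescaling then gives
\[
r_{K_f}(\theta)\,r_{L_g}(\eta)^{p-1}\ip{\theta}{\nabla V(\eta)}\le\lambda,
\]
where $\lambda$ is a power of $\int\rho(\|x\|_r^p)\,dx$ divided by $|B_r^n|$. This bound says that $L_g\subset\lambda^{1/(p-1)}K_f^{V}$. Step 1 then yields \eqref{eq:intro-FM-conclusion} after the constants are collected.

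I expect the main obstacle to be this last bookkeeping. The normalization in Step 2 has to be chosen so that the constant $\lambda$ reproduces exactly $\int\rho(\|x\|_r^p)\,dx$. To pin it down I would first run the whole argument with $\rho(t)=e^{-t/p}$, or with $\rho$ an indicator, and test against the model case $f=e^{-V}$, $g=e^{-V}$.
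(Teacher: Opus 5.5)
Your Steps 2 and 3 are the paper's proof of Theorem~\ref{thm:FM}: Ball-type radial functions, the one-dimensional multiplicative Pr\'ekopa--Leindler inequality, the inclusion $L_g\subset\lambda^{1/(p-1)}K_f^{V}$, and passage to the convex hull. Since $K^V=\nabla V^*(K^\circ)$, your Step~1 is exactly the hypothesis \eqref{eq:set-santalo}. The gap is that Step~1 cannot be obtained from Corollary~\ref{cor:model-sharp}. The paper instead imports \eqref{eq:set-santalo} for $V=\|x\|_r^p/p$, $p\ge r\ge2$, from the weighted Blaschke--Santal\'o inequality of \cite{CKLR} (Corollary~1.5 with its set form, Corollary~6.4).

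Your derivation fails for two reasons. First, the exponents do not match. For this $V$, \eqref{eq:power-convexity} holds with cost exponent $r$, not $p$; the paper's remark says ``on taking $p=r$''. With cost exponent $p>r$ it fails. For instance, with $n=2$, $j=2$ and $s=u_1^{r(p-1)/p}+u_2^{r/p}$, one gets $\partial_{u_2}^2 s^{p/r}=-(1-\tfrac rp)\,s^{p/r-2}u_2^{r/p-2}u_1^{r(p-1)/p}<0$ whenever $u_1>0$. Using cost exponent $r$ instead bounds $|K|^{1/(r-1)}$ times the volume of a polar for the geometric means $x^{1/r}y^{(r-1)/r}$. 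That is a different scale-invariant quantity from $|K|^{1/(p-1)}|K^V|$.

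Second, even for $p=r$, where the corollary does apply, the cost comparison runs the wrong way. There $c_{r,V}(x,y)=\sum_i(x_i\partial_iV(y))_+\ge\ip{x}{\nabla V(y)}$. For $\phi=\|x\|_K^p/p$ this gives $\phi^{c}\ge\frac{p-1}{p}\|y\|_{K^V}^p$, hence $\int e^{-\phi^c/(p-1)}\le C|K^V|$. That is an upper bound by $|K^V|$, not the lower bound you need to control $|K^V|$. A lower bound would require $c_{p,V}$ to be bounded above in terms of $\ip{x}{\nabla V(y)}$.

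Your H\"older estimate $c_{p,V}(x,y)\le C\ip{x}{\nabla V(y)}+C'V(y)$ is false. Fix $y$ with $y_1y_2>0$, choose $x$ with $\ip{x}{\nabla V(y)}=0$ and $x_1y_1>0$, and replace $x$ by $\lambda x$. The left side grows linearly in $\lambda$ while the right side stays constant.

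Concretely, the sublevel sets of $\phi^c$ are dilates of $\{y:\ \sup_{x\in K}\sum_i(x_i\partial_iV(y))_+\le1\}$. This set equals $K^V$ for unconditional $K$ but can be much smaller otherwise. Already at $p=r=2$, your route would derive the classical Blaschke--Santal\'o inequality from the inequality with cost $\sum_i(x_iy_i)_+$, which is formally weaker because $\phi^c\ge\phi^*$.

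Reaching general symmetric $K$ requires a symmetrization argument for the $\nabla V$-polarity itself. That is the nontrivial input the paper takes from \cite{CKLR}, and with it your Steps 2 and 3 finish the proof.
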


No monotonicity assumption on $\rho$ is needed. The case $p=r=2$ is the
Euclidean functional inequality of Fradelizi and Meyer. The input for the
stated range $p\ge r\ge2$ is the weighted Blaschke--Santal\'o inequality of
Colesanti, Kolesnikov, Livshyts and Rotem \cite{CKLR}.

Finally, we consider a cost on the sphere. Kolesnikov
\cite{KolesnikovSphere} studied transport functionals for the logarithmic
inner-product cost and their relation to the symmetric logarithmic
Minkowski problem. On $\Sph^{n-1}$, this cost is
\[
 c(x,y)=\log\ip{x}{y}\quad\text{if }\ip{x}{y}>0,
 \qquad c(x,y)=-\infty\quad\text{otherwise}.
\]
With $\sigma$ denoting normalized surface measure, the classical
Blaschke--Santal\'o inequality gives
\begin{equation}\label{eq:intro-spherical}
 \int_{\Sph^{n-1}}e^{-nf}\,d\sigma
 \int_{\Sph^{n-1}}e^{-nf^c}\,d\sigma\le1
\end{equation}
for even $f$. We compute its contact map, its second variation, and the
equation satisfied by smooth extremizers. We also recover
\begin{equation}\label{eq:intro-transport}
 K(\mu,\nu)\le
 \frac1n\Ent_\sigma(\mu)+\frac1n\Ent_\sigma(\nu),
\end{equation}
for even probability measures, where $K$ is the optimal transport cost for
$-\log\ip{x}{y}$ on the positive-inner-product region. The case
$\mu=\sigma$ was proved by Kolesnikov \cite{KolesnikovSphere}; the full
two-measure inequality \eqref{eq:intro-transport} is
\cite[Theorem~3.7]{FGSZ}.

The paper is organized as follows. Section~\ref{sec:orthant-cost} proves
the orthant inequality. Section~\ref{sec:steiner-extension} contains the
Steiner argument and the proof of the main theorem.
Section~\ref{sec:differentiation-BL} gives the second-variation calculation.
Section~\ref{sec:p-FM} proves the $L^p$ Fradelizi--Meyer equivalence
and its radial and transport consequences. Section~\ref{sec:examples}
collects explicit examples and sharpness calculations.
Section~\ref{sec:spherical-BS} treats the spherical cost and its deficits.
Section~\ref{sec:restriction} discusses restrictions to convex bodies
and their relation to dilation concavity.

\medskip
\noindent\textbf{Acknowledgement.}
Galyna V.~Livshyts and Liran Rotem are supported by NSF--BSF DMS-2247834.
Liran Rotem is grateful to the Lady Davis Fellowship at the Technion.

\medskip
\noindent\textbf{Statement of AI use.} The main ideas and results were developed by the authors before Spring 2026. AI was used for writing and polishing, and some minor technical advances. More recently, AI was prompted to add section 7 with the examples.

\paragraph{Recent developments (October 8, 2026).}
OpenAI's October 6 release \cite{OpenAIRelease,OpenAICollection} includes
a preprint announcing the logarithmic Brunn--Minkowski inequality in
every dimension and, by transfer, the scalar-dilation $(B)$ property
for even log-concave Radon measures \cite{OpenAILogBM}. The same
collection announces symmetric and nonsymmetric Mahler inequalities
(result family~087); the logarithmic Brunn--Minkowski result is
family~091. In view of the reported $(B)$ result, the question
in Section~\ref{sec:restriction} is framed as a stronger restricted
spectral estimate and a possible independent mechanism for dilation
concavity, rather than as a claim that the scalar $(B)$ problem remains
unresolved. Generally, this new influx of ideas should bring a lot of development to the topics developed in this paper, but we choose to separate the human-driven part, as presented in this note, from any future advances, which will be largely AI-driven.

\section{Preliminaries}

Unless another measure is specified, integrals on $\R^n$ are with respect
to Lebesgue measure. A function is even if $F(-x)=F(x)$ and unconditional
if it is invariant under every coordinate reflection. For
$\varepsilon\in\{-1,1\}^n$, let
\[
 Q_\varepsilon=\{x:\ \varepsilon_i x_i>0\text{ for all }i\}.
\]
We say that a nonnegative integrable function $h$ has equal orthant masses
if
\[
 \int_{Q_\varepsilon}h=2^{-n}\int h
 \qquad\text{for every }\varepsilon.
\]
In particular, an unconditional function has equal orthant masses.

A convex potential is taken to be lower semicontinuous and proper. It is
coercive if it tends to $+\infty$ as $|x|\to\infty$. We allow convex test
potentials to take the value $+\infty$, and define their cost transforms by
taking the supremum only over the points where the potential is finite.
All transforms which occur in integral statements are assumed measurable.
The reference potentials used in the model costs are finite-valued.

An unconditional convex function is nondecreasing in each coordinate on
$[0,\infty)^n$. Indeed, decreasing the absolute value of one coordinate
amounts to taking a convex combination of a point and its reflection.
Consequently, for $0<a<1$ and $x,y$ in a common orthant,
\begin{equation}\label{eq:geometric-convexity}
 V(x^a y^{1-a})
 \le V(a|x|+(1-a)|y|)
 \le aV(x)+(1-a)V(y),
\end{equation}
where the absolute values in the middle expression are coordinatewise.
Thus geometric convexity is automatic for the unconditional convex
potentials considered here.

We will use the Pr\'ekopa--Leindler inequality
\cite{Prekopa,Leindler}, including its multiplicative form on an orthant.
The latter follows by setting $x_i=e^{u_i}$ and including the Jacobian in
each of the three functions.

\section{The orthant-wise cost inequality}\label{sec:orthant-cost}

The first step does not require convexity or symmetry of the reference
potential. It also keeps track of the mass in each orthant.

\begin{theorem}\label{thm:orthant}
Let $p>1$, let $V$ be measurable with $e^{-V}$ integrable, and suppose
$c$ satisfies \eqref{eq:cost-lower}. If $e^{-\phi}$ has positive finite
integral and equal orthant masses, then
\begin{equation}\label{eq:orthant-bound}
 \left(\int e^{-\phi}\right)^{1/(p-1)}
 \int e^{-\phi^c/(p-1)}
 \le2^{n/(p-1)}\sum_\varepsilon
       \left(\int_{Q_\varepsilon}e^{-V}\right)^{p/(p-1)}.
\end{equation}
If $e^{-V}$ also has equal orthant masses, the right-hand side is
$(\int e^{-V})^{p/(p-1)}$.
\end{theorem}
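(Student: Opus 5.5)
The plan is to work one orthant at a time. On each orthant the lower bound \eqref{eq:cost-lower} becomes a pointwise hypothesis of Pr\'ekopa--Leindler type, and the multiplicative form of Pr\'ekopa--Leindler recalled in the preliminaries then bounds the orthant piece of $\int e^{-\phi^c/(p-1)}$. Summing over orthants finishes the proof. The coordinate hyperplanes are Lebesgue null, so $\int e^{-\phi^c/(p-1)}=\sum_\varepsilon\int_{Q_\varepsilon}e^{-\phi^c/(p-1)}$.

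Fix $\varepsilon$ and take $x,y\in Q_\varepsilon$ with $\phi(x)<\infty$. By the definition \eqref{eq:def-c-transform} and \eqref{eq:cost-lower},
\[
 \phi^c(y)\ge c(x,y)-\phi(x)\ge pV\bigl(x^{1/p}y^{(p-1)/p}\bigr)-\phi(x).
\]
Set $h=e^{-\phi}$, $g=e^{-\phi^c/(p-1)}$ and $k=e^{-V}$, all restricted to $Q_\varepsilon$. Then for $z=x^{1/p}y^{(p-1)/p}\in Q_\varepsilon$ we get
\[
 k(z)\ge h(x)^{1/p}\,g(y)^{(p-1)/p}.
\]
This also holds trivially when $\phi(x)=\infty$ or $\phi^c(y)=+\infty$, since then the right-hand side vanishes.

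This is exactly the hypothesis of the multiplicative Pr\'ekopa--Leindler inequality on $Q_\varepsilon$ with weights $1/p$ and $(p-1)/p$. To justify it, reflect to the positive orthant and substitute $x_i=e^{u_i}$. The Jacobian factor $\prod_i e^{u_i}$ of the point $z$ is the weighted geometric mean of the Jacobian factors of $x$ and $y$, so the weighted functions still satisfy the additive Pr\'ekopa--Leindler hypothesis. It yields
\[
 \int_{Q_\varepsilon}e^{-V}\ge\Bigl(\int_{Q_\varepsilon}e^{-\phi}\Bigr)^{1/p}\Bigl(\int_{Q_\varepsilon}e^{-\phi^c/(p-1)}\Bigr)^{(p-1)/p}.
\]

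By the equal orthant masses, $\int_{Q_\varepsilon}e^{-\phi}=2^{-n}\int e^{-\phi}>0$, so we may divide by it. Raising to the power $p/(p-1)$ gives
\[
 \int_{Q_\varepsilon}e^{-\phi^c/(p-1)}\le 2^{n/(p-1)}\Bigl(\int e^{-\phi}\Bigr)^{-1/(p-1)}\Bigl(\int_{Q_\varepsilon}e^{-V}\Bigr)^{p/(p-1)}.
\]
Summing over $\varepsilon$ and multiplying by $(\int e^{-\phi})^{1/(p-1)}$ gives \eqref{eq:orthant-bound}.

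For the final claim, suppose each $\int_{Q_\varepsilon}e^{-V}=2^{-n}\int e^{-V}$. Then the right-hand side of \eqref{eq:orthant-bound} equals
\[
 2^{n/(p-1)}\cdot 2^n\cdot 2^{-np/(p-1)}\Bigl(\int e^{-V}\Bigr)^{p/(p-1)}.
\]
The exponent of $2$ is $\frac{n}{p-1}+n-\frac{np}{p-1}=0$, so this is $(\int e^{-V})^{p/(p-1)}$.

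The only step needing care is the multiplicative Pr\'ekopa--Leindler application. One must check that the Jacobian weights combine correctly under the exponents $1/p$ and $(p-1)/p$, and handle points where $\phi$ or $\phi^c$ is infinite; both points are handled above. Measurability of $\phi^c$ is assumed in the statement.
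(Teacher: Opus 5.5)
Your proposal is correct and follows the paper's proof essentially step for step. The steps are the same: the pointwise bound $e^{-V(z)}\ge e^{-\phi(x)/p}e^{-\phi^c(y)/p}$ on each orthant, the logarithmic change of variables with Jacobian weights feeding into Pr\'ekopa--Leindler, division by the common orthant mass $2^{-n}\int e^{-\phi}$, and summation over orthants. The one detail the paper states explicitly and you do not is a truncation step, which lets Pr\'ekopa--Leindler be applied when $\int e^{-\phi^c/(p-1)}$ is not known in advance to be finite; the resulting bound then shows that it is finite.
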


\begin{proof}
Write $F=e^{-\phi}$, $G=e^{-\phi^c/(p-1)}$ and $H=e^{-V}$. By the
definition of the transform,
\[
 H(x^{1/p}y^{(p-1)/p})\ge F(x)^{1/p}G(y)^{(p-1)/p}
\]
on each common orthant. On the positive orthant, set $x=e^u$, $y=e^v$
and define
\[
 \widetilde F(u)=e^{\sum_i u_i}F(e^u),\qquad
 \widetilde G(v)=e^{\sum_i v_i}G(e^v),\qquad
 \widetilde H(w)=e^{\sum_i w_i}H(e^w).
\]
Then
\[
 \widetilde H\left(\frac{u}{p}+\frac{p-1}{p}v\right)
 \ge\widetilde F(u)^{1/p}\widetilde G(v)^{(p-1)/p}.
\]
Pr\'ekopa--Leindler gives
\[
 \int_{Q_+}H\ge
 \left(\int_{Q_+}F\right)^{1/p}
 \left(\int_{Q_+}G\right)^{(p-1)/p}.
\]
The same argument applies after a reflection to every other orthant. Hence
\[
 \int_{Q_\varepsilon}G\le
 \left(2^{-n}\int F\right)^{-1/(p-1)}
 \left(\int_{Q_\varepsilon}H\right)^{p/(p-1)}.
\]
Summing proves \eqref{eq:orthant-bound}. If $H$ also has equal orthant
masses, the powers of $2$ cancel. The argument applies by truncation when
the integrability of $G$ is not known in advance, and the resulting bound
then proves that $G$ is integrable.
\end{proof}

For the model cost \eqref{eq:model-cost}, unconditional convexity gives
the useful pointwise inequality
\begin{equation}\label{eq:model-support}
 c_{p,V}(x,y)\le V(x)+(p-1)V(y).
\end{equation}
Indeed, its vector $z$ is coordinatewise at most
$|x|^{1/p}|y|^{(p-1)/p}$, and \eqref{eq:geometric-convexity} applies.
Equality holds at $x=y$, including points on coordinate hyperplanes.
Taking the supremum in $x$ proves \eqref{eq:model-self-dual}.

It follows that the model inequality is sharp on the class of functions
with equal orthant masses, for every finite unconditional convex coercive
$V$. The additional assumption \eqref{eq:power-convexity} is used only to
pass to all even convex functions.

\section{Steiner symmetrization and the proof of the main theorem}
\label{sec:steiner-extension}

For a convex body $K$ and a coordinate hyperplane $H$, Steiner
symmetrization replaces each nonempty interval
$\{s:(X,s)\in K\}$ by the centered interval of the same length.
Empty fibers stay empty. We denote the resulting body by $S_HK$.
For a convex coercive potential $\phi$, define $S_H\phi$ by
\[
 \{S_H\phi\le a\}=S_H\{\phi\le a\}.
\]
Equivalently, symmetrize its epigraph in the same spatial direction.
This preserves convexity and the distribution of the sublevel-set
volumes. In particular,
\begin{equation}\label{eq:steiner-primal}
 \int e^{-S_H\phi}=\int e^{-\phi}.
\end{equation}
It also preserves central symmetry.

\begin{lemma}\label{lem:steiner}
Suppose $c$ satisfies \eqref{eq:SC} for $H$, and let $\phi$ be a convex
coercive potential. For $\beta>0$, assume
\[
 \int_H e^{-\beta\phi^c(Y,t)}\,dY
 =\int_H e^{-\beta\phi^c(Y,-t)}\,dY
 \qquad\text{for almost every }t.
\]
Then
\begin{equation}\label{eq:steiner-dual}
 \int e^{-\beta(S_H\phi)^c}\ge\int e^{-\beta\phi^c}.
\end{equation}
\end{lemma}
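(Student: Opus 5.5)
The plan is the Meyer--Pajor scheme: a pointwise midpoint inequality for the transforms, followed by Pr\'ekopa--Leindler on each slice parallel to $H$. Write $\psi=S_H\phi$. The pointwise claim I would prove first is
\begin{equation*}
 \psi^c\Bigl(\tfrac{Y_1+Y_2}{2},t\Bigr)\le\tfrac12\phi^c(Y_1,t)+\tfrac12\phi^c(Y_2,-t)
 \qquad\text{for all }Y_1,Y_2\in H,\ t\in\R .
\end{equation*}
Fix a point $x'=(X,s')$ with $a:=\psi(x')<\infty$. By the definition of $S_H\psi$ through sublevel sets, $x'\in S_H\{\phi\le a\}$. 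Since $\phi$ is lower semicontinuous, convex and coercive, $\{\phi\le a\}$ is a compact convex set. Hence its fiber over $X$ is a closed interval $[\alpha,\beta]$ with $|s'|\le(\beta-\alpha)/2$.

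Next I choose $s_1,s_2\in[\alpha,\beta]$ with $(s_2-s_1)/2=s'$; for instance $s_2=\frac{\alpha+\beta}{2}+s'$ and $s_1=\frac{\alpha+\beta}{2}-s'$. Then $\phi(X,s_i)\le a=\psi(x')$ for $i=1,2$. The Steiner condition \eqref{eq:SC} therefore gives
\begin{align*}
 c\Bigl(x',\bigl(\tfrac{Y_1+Y_2}{2},t\bigr)\Bigr)-\psi(x')
 &\le\tfrac12\bigl[c((X,s_2),(Y_1,t))-\phi(X,s_2)\bigr]\\
 &\quad+\tfrac12\bigl[c((X,s_1),(Y_2,-t))-\phi(X,s_1)\bigr]\\
 &\le\tfrac12\phi^c(Y_1,t)+\tfrac12\phi^c(Y_2,-t).
\end{align*}
Both points $(X,s_i)$ lie in the domain of $\phi$, so they are admissible in the supremum defining $\phi^c$. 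Taking the supremum over $x'$ in the domain of $\psi$ gives the claim. This holds even when some quantities equal $\pm\infty$, with the conventions for $-\infty$ costs.

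For the slice step, fix $t$ and define on $H\cong\R^{n-1}$
\[
 h(Y)=e^{-\beta\psi^c(Y,t)},\qquad f(Y)=e^{-\beta\phi^c(Y,t)},\qquad g(Y)=e^{-\beta\phi^c(Y,-t)} .
\]
The claim gives $h\bigl(\frac{Y_1+Y_2}{2}\bigr)\ge f(Y_1)^{1/2}g(Y_2)^{1/2}$. Pr\'ekopa--Leindler with $\lambda=1/2$ (the transforms are assumed measurable) then yields $\int_H h\ge(\int_H f)^{1/2}(\int_H g)^{1/2}$. The hypothesis of the lemma says that $\int_H f=\int_H g$ for almost every $t$, so the right-hand side equals $\int_H e^{-\beta\phi^c(Y,t)}\,dY$. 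Integrating in $t$ and using Fubini gives \eqref{eq:steiner-dual}. If some slice integral is infinite, I would apply the same argument after truncation.

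The step I expect to require the most care is the first one: matching the level-set definition of $S_H\phi$ with the choice of $s_1,s_2$. This is where lower semicontinuity and coercivity are needed. They guarantee that every fiber of $\{\phi\le a\}$ is a closed bounded interval, so that the endpoints of the symmetrized fiber are actually attained. With that in hand, every point of $\{\psi\le a\}$ over $X$ can be written as $(X,(s_2-s_1)/2)$ with both $(X,s_i)$ in $\{\phi\le a\}$. Empty fibers cause no problem, since they stay empty under symmetrization and such $X$ contribute nothing to the supremum.
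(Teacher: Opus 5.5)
Your proposal is correct and follows the paper's proof: a pointwise midpoint bound for $(S_H\phi)^c$, obtained by writing $s=(s_2-s_1)/2$ with both $(X,s_i)$ in the fiber of $\{\phi\le a\}$ and applying \eqref{eq:SC}, followed by Pr\'ekopa--Leindler on each slice, the equal-slice-mass hypothesis, and integration in $t$. The only cosmetic difference is that you take $a=S_H\phi(X,s)$ directly, using compactness of the sublevel sets, whereas the paper lets $a\downarrow S_H\phi(X,s)$.
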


\begin{proof}
Fix $Y_1,Y_2,t$. If $S_H\phi(X,s)\le a$, the fiber of $\{\phi\le a\}$
over $X$ contains two points $s_1,s_2$ with $s=(s_2-s_1)/2$.
This uses the fact that the original fiber is an interval; the two points
need not have opposite signs. The definition of the transform gives
\[
 c((X,s_2),(Y_1,t))\le a+\phi^c(Y_1,t),\qquad
 c((X,s_1),(Y_2,-t))\le a+\phi^c(Y_2,-t).
\]
By \eqref{eq:SC},
\[
 c\left((X,s),\left(\frac{Y_1+Y_2}{2},t\right)\right)
 \le a+\frac{\phi^c(Y_1,t)+\phi^c(Y_2,-t)}2.
\]
Letting $a\downarrow S_H\phi(X,s)$ and taking the supremum over $(X,s)$
yields
\[
 (S_H\phi)^c\left(\frac{Y_1+Y_2}{2},t\right)
 \le\frac{\phi^c(Y_1,t)+\phi^c(Y_2,-t)}2.
\]
Apply Pr\'ekopa--Leindler on $H$ to the exponentials. This gives
\begin{align*}
 \int_H e^{-\beta(S_H\phi)^c(Y,t)}\,dY
 &\ge
 \left(\int_H e^{-\beta\phi^c(Y,t)}\,dY\right)^{1/2}
 \left(\int_H e^{-\beta\phi^c(Y,-t)}\,dY\right)^{1/2}\\
 &=\int_H e^{-\beta\phi^c(Y,t)}\,dY.
\end{align*}
Integration in $t$ proves the claim. Extended integral values can be
handled by monotone truncation in Pr\'ekopa--Leindler.
\end{proof}

\begin{proof}[Proof of Theorem~\ref{thm:main-cost}]
If $\phi$ is even and $c(-x,-y)=c(x,y)$, then $\phi^c$ is even. Thus its
slice integrals at $t$ and $-t$ agree after changing $Y$ to $-Y$.
Apply Lemma~\ref{lem:steiner} with $\beta=1/(p-1)$ successively in all
coordinate directions.

Symmetrization in one coordinate preserves reflection symmetry in the
other coordinates. Therefore one pass through the $n$ coordinates gives
an unconditional potential $\widetilde\phi$, with
\[
 \int e^{-\widetilde\phi}=\int e^{-\phi},\qquad
 \int e^{-\phi^c/(p-1)}\le
 \int e^{-\widetilde\phi^c/(p-1)}.
\]
The last potential has equal orthant masses, so Theorem~\ref{thm:orthant}
applies. Since $V$ is unconditional, its orthant masses are also equal,
and we obtain \eqref{eq:main-cost}.
\end{proof}

\begin{proof}[Proof of Corollary~\ref{cor:model-sharp}]
Only the Steiner condition remains to be checked. Fix the last coordinate
and first take $t>0$. For fixed $X$, define
\[
 F(a,b)=V\bigl(|X_1|^{1/p}b_1^{(p-1)/p},\ldots,
 |X_{n-1}|^{1/p}b_{n-1}^{(p-1)/p},t^{(p-1)/p}a^{1/p}\bigr)
\]
for $a\ge0$ and $b\in[0,\infty)^{n-1}$. Assumption
\eqref{eq:power-convexity}, with a rescaling of its arguments, says that
$F$ is convex. It is also nondecreasing in each variable.

Write $r_+=\max\{r,0\}$ and choose
$\varepsilon_i=\operatorname{sgn}(X_i)$ when $X_i\ne0$; its value is
irrelevant when $X_i=0$. The function
\[
 (s,Y)\longmapsto F\bigl(s_+,((\varepsilon_iY_i)_+)_{i<n}\bigr)
\]
is convex, as a composition of a convex coordinatewise nondecreasing
function with convex functions. Its value is $c_{p,V}((X,s),(Y,t))/p$.
Its value at $(-s_1,Y_2)$ is
$c_{p,V}((X,s_1),(Y_2,-t))/p$. Convexity now gives exactly
\eqref{eq:SC}. For $t<0$, reverse the last-coordinate signs; for $t=0$,
use continuity. The same argument works in every coordinate.

The cost is invariant under simultaneous coordinate reflections, so
Theorem~\ref{thm:main-cost} applies. Equation
\eqref{eq:model-self-dual}, already proved from \eqref{eq:model-support},
gives equality at $V$.
\end{proof}

\begin{remark}
Assumption \eqref{eq:power-convexity} can equivalently be written as the
Steiner inequality for the geometric products when all three products
belong to common orthants. To see this, fix $X$ and $t$, and regard the
inequality as midpoint convexity in $(s,Y)$ on the positive orthant.
Continuity turns midpoint convexity into convexity. The definition
\eqref{eq:model-cost} is what extends that condition to arbitrary signs.
\end{remark}

\begin{corollary}\label{cor:quadratic}
Let $A=\diag(a_1,\ldots,a_n)$ with $a_i>0$, and let
$c(x,y)=\ip{Ax}{y}$. Then every even convex coercive potential satisfies
\[
 \int e^{-\phi}\int e^{-\phi^c}
 \le\frac{(2\pi)^n}{\det A}.
\]
\end{corollary}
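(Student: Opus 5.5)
Two routes are available. The faster one is a linear change of variables that reduces the statement to Ball's inequality \eqref{BSBall-G}. The paper's own machinery also gives it, via Theorem~\ref{thm:main-cost} with $p=2$ and $V(x)=\frac12\ip{Ax}{x}$; I describe that route, since the corollary is placed to illustrate it.

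For $x,y$ in a common orthant, $x^{1/2}y^{1/2}$ has coordinates $\operatorname{sgn}(x_i)\sqrt{|x_iy_i|}$. Then
\[
 2V(x^{1/2}y^{1/2})=\sum_i a_i|x_i||y_i|=\sum_i a_ix_iy_i=\ip{Ax}{y},
\]
since $x_iy_i>0$. So \eqref{eq:cost-lower} holds with equality. The cost is bilinear, so $c(-x,-y)=c(x,y)$. The potential $V$ is unconditional and $\int e^{-V}=(2\pi)^{n/2}(\det A)^{-1/2}$. Hence the right-hand side of \eqref{eq:main-cost} with $p=2$ is $(\int e^{-V})^2=(2\pi)^n/\det A$, which is exactly the claimed bound.

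The remaining hypothesis is the Steiner condition \eqref{eq:SC} in each coordinate $j$. Write $A=\diag(A_H,a_j)$. The left side of \eqref{eq:SC} is
\[
 \tfrac12\ip{A_HX}{Y_1+Y_2}+a_j\tfrac{s_2-s_1}{2}t .
\]
The right side is
\[
 \tfrac12\bigl(\ip{A_HX}{Y_1}+a_js_2t\bigr)+\tfrac12\bigl(\ip{A_HX}{Y_2}-a_js_1t\bigr).
\]
These two expressions agree identically, so \eqref{eq:SC} holds with equality. This uses only bilinearity and the fact that $A$ is diagonal, so it does not mix the coordinate $j$ with $H$. Measurability is not an issue: $\phi^c$ is a supremum of affine functions, hence convex and lower semicontinuous. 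Theorem~\ref{thm:main-cost} then applies and yields the corollary for even lower semicontinuous proper convex coercive $\phi$ with $\int e^{-\phi}>0$. If $\int e^{-\phi}=0$, the product is understood separately, as in the paper's convention.

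As a check, or as the alternative short proof, substitute $\psi(x)=\phi(A^{-1/2}x)$. Then
\[
 \phi^c(y)=\sup_x\{\ip{A^{1/2}x}{A^{1/2}y}-\phi(x)\}=\psi^*(A^{1/2}y).
\]
The two integrals contribute factors $(\det A)^{-1/2}$ each, and Ball's inequality \eqref{BSBall-G} for the even function $\psi$ gives the same bound.

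There is no serious obstacle. The only points needing care are verifying \eqref{eq:SC} across different orthants, where bilinearity makes it trivial, and checking that $\phi$ satisfies the hypotheses of Theorem~\ref{thm:main-cost} as stated.
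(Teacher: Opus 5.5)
Your proposal is correct and follows the paper's proof. Like the paper, you take $p=2$ and $V(x)=\frac12\ip{Ax}{x}$ in Theorem~\ref{thm:main-cost}, check that \eqref{eq:cost-lower} and \eqref{eq:SC} hold with equality and that the cost is symmetric, and read off $(2\pi)^n/\det A$ from the Gaussian integral. Your alternative reduction to Ball's inequality via $\phi^c(y)=\psi^*(A^{1/2}y)$ is also valid; it matches the remark in Section~\ref{sec:examples} that the diagonal restriction belongs only to the coordinate proof.
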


\begin{proof}
Take $p=2$ and $V(x)=\frac12\ip{Ax}{x}$. The lower bound
\eqref{eq:cost-lower} and the Steiner condition both hold with equality,
and the cost has the required symmetry. The Gaussian integral gives the
constant.
\end{proof}

\section{Differentiation and Brascamp--Lieb inequalities}
\label{sec:differentiation-BL}

\subsection{The general cost}

Fix $\alpha>0$ and consider
\begin{equation}\label{eq:J}
 J(\phi)=\alpha\log\int e^{-\phi}
            +\log\int e^{-\alpha\phi^c}.
\end{equation}
We first calculate its second variation without making a symmetry
assumption. Let $\phi_t$ be a two-sided smooth family, with
$\phi_0=V$, and write
\[
 f=\left.\frac{d}{dt}\right|_{0}\phi_t,
 \qquad k=\left.\frac{d^2}{dt^2}\right|_{0}\phi_t.
\]
In this subsection we assume that the integrals can be differentiated
twice. We also assume that, for almost every $y$, the maximum defining
$\phi_t^c(y)$ is attained at a unique point $T_t(y)$ which depends smoothly
on $t$. The cost and the potentials must be twice differentiable near
these contact points. Put $T=T_0$ and assume
\[
 M(y)=\nabla^2V(T(y))-D^2_{xx}c(T(y),y)>0.
\]

The first-order condition is
\[
 D_xc(T_t(y),y)=\nabla\phi_t(T_t(y)).
\]
Differentiating it gives
\[
 \dot T_0(y)=-M(y)^{-1}\nabla f(T(y)).
\]
On the other hand, differentiating
$\phi_t^c(y)=c(T_t(y),y)-\phi_t(T_t(y))$ cancels the terms involving
$\dot T_t$. Thus
\begin{align}
 \left.\frac{d}{dt}\right|_0\phi_t^c(y)&=-f(T(y)),\label{eq:c-first}\\
 \left.\frac{d^2}{dt^2}\right|_0\phi_t^c(y)
 &=-k(T(y))+
 \ip{M(y)^{-1}\nabla f(T(y))}{\nabla f(T(y))}.
 \label{eq:c-second}
\end{align}

Normalize the two measures at $t=0$:
\[
 d\mu=\frac{e^{-V}}{\int e^{-V}}\,dx,
 \qquad d\nu=\frac{e^{-\alpha V^c}}{\int e^{-\alpha V^c}}\,dy.
\]
The logarithmic derivatives of the first integral are
\[
 \left.\frac{d}{dt}\right|_0\log\int e^{-\phi_t}=-\int f\,d\mu,
 \qquad
 \left.\frac{d^2}{dt^2}\right|_0\log\int e^{-\phi_t}
 =\Var_\mu(f)-\int k\,d\mu.
\]
Using \eqref{eq:c-first} in the second integral gives
\begin{equation}\label{eq:J-first}
 J'(0)=\alpha\left(\int f\circ T\,d\nu-\int f\,d\mu\right).
\end{equation}
Consequently, criticality implies $T_\#\nu=\mu$ whenever the allowed
variations determine the measure. Even variations suffice if both
$T_\#\nu$ and $\mu$ are even.

Before making this identification, the second derivative is
\begin{align}
 J''(0)
 &=\alpha\Var_\mu(f)+\alpha^2\Var_\nu(f\circ T)
   -\alpha\int k\,d\mu+\alpha\int k\circ T\,d\nu\notag\\
 &\quad-\alpha\int
 \ip{M(y)^{-1}\nabla f(T(y))}{\nabla f(T(y))}\,d\nu(y).
 \label{eq:J-second}
\end{align}
If $T_\#\nu=\mu$, the terms containing $k$ cancel. We obtain
\begin{equation}\label{eq:J-second-transport}
 J''(0)=\alpha(1+\alpha)\Var_\mu(f)
 -\alpha\int
 \ip{M(y)^{-1}\nabla f(T(y))}{\nabla f(T(y))}\,d\nu(y).
\end{equation}

\begin{proposition}\label{prop:local-BL}
Suppose $J$ has a local maximum at $V$ along a family satisfying the
assumptions above. If
\[
 T(y)=y,\qquad \alpha V^c(y)=V(y)+C
\]
almost everywhere, then its first variation $f$ satisfies
\begin{equation}\label{eq:local-BL}
 \Var_\mu(f)\le\frac1{1+\alpha}
 \int\left\langle
 \bigl(\nabla^2V-D^2_{xx}c(x,x)\bigr)^{-1}\nabla f,
 \nabla f\right\rangle\,d\mu.
\end{equation}
\end{proposition}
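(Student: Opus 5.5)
The plan is to read the inequality off the second-order condition for a local maximum, using the formula \eqref{eq:J-second-transport} already derived in this subsection. A local maximum at $V$ along the family $\phi_t$ means that $t\mapsto J(\phi_t)$ has a local maximum at $t=0$. Hence $J'(0)=0$ and $J''(0)\le0$.

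The first step is to identify the two reference measures. Since $\alpha V^c=V+C$ almost everywhere, we have $e^{-\alpha V^c}=e^{-C}e^{-V}$. After normalization the constant disappears, so $\nu=\mu$. Since $T(y)=y$, it follows at once that $T_\#\nu=\mu$. This makes \eqref{eq:J-first} vanish identically, consistent with criticality. More importantly, it makes the terms containing $k$ in \eqref{eq:J-second} cancel. The reduced formula \eqref{eq:J-second-transport} is therefore available, without any need to determine the measure from the admissible variations.

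The second step is to substitute into \eqref{eq:J-second-transport}. With $T=\Id$ and $\nu=\mu$, the composition $f\circ T$ is $f$, and the matrix becomes
\[
M(x)=\nabla^2V(x)-D^2_{xx}c(x,x),
\]
evaluated at the contact point $T(x)=x$. The variance term $\alpha^2\Var_\nu(f\circ T)$ combines with $\alpha\Var_\mu(f)$ to give $\alpha(1+\alpha)\Var_\mu(f)$. The second-order condition then reads
\[
\alpha(1+\alpha)\Var_\mu(f)-\alpha\int\ip{M(x)^{-1}\nabla f(x)}{\nabla f(x)}\,d\mu(x)=J''(0)\le0 .
\]
Dividing by $\alpha(1+\alpha)>0$ gives \eqref{eq:local-BL}.

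The only delicate point is justifying $J''(0)\le0$ from the local maximum. This needs $t\mapsto J(\phi_t)$ to be twice differentiable at $0$, which is exactly the standing assumption of the subsection (the integrals can be differentiated twice and the contact map is smooth). With that in hand, the usual second-derivative test for a maximum of a $C^2$ function of one variable applies. Positivity of $M$ is assumed, so $M^{-1}$ is well defined and the quadratic form is finite wherever the integral converges. No further obstacle arises; the proof is a direct specialization of the preceding computation.
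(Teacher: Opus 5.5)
Your proposal is correct and matches the paper's proof: $\alpha V^c=V+C$ gives $\nu=\mu$ after normalization, and $T=\Id$ then makes \eqref{eq:J-second-transport} applicable, so $J''(0)\le0$ followed by division by $\alpha(1+\alpha)$ yields \eqref{eq:local-BL}. Your remark that twice differentiability of the two-sided family at $t=0$ is what justifies $J''(0)\le0$ spells out a point the paper leaves implicit.
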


\begin{proof}
The second identity gives $\nu=\mu$ after normalization. Since $T=\Id$,
\eqref{eq:J-second-transport} applies. The inequality $J''(0)\le0$ gives
the result.
\end{proof}

When the cost inequality is only known for even convex potentials, the
family in this proposition must stay in that class for both signs of
$t$. The inequality then holds for its even first variations, and passes
to their closure in $L^2(\mu)$ and in the energy on the right of
\eqref{eq:local-BL}.

\subsection{The multiplicative model cost}

Let $c=c_{p,V}$ be defined by \eqref{eq:model-cost}, and assume $V$ is
$C^2$ on the open orthants. Near a diagonal point $x=y$ with no zero
coordinates, the two vectors have matching signs. We may therefore use
the original geometric product to compute the derivatives, with the
signs restored inside $V$ by unconditionality.

Set $a=1/p$, $b=(p-1)/p$ and
$z_i=\operatorname{sgn}(x_i)|x_i|^a|y_i|^b$ locally. At $x=y$,
\[
 \partial_{x_i}z_i=\frac1p,
 \qquad
 \partial_{x_i}^2z_i=-\frac{p-1}{p^2x_i}.
\]
The chain rule gives
\begin{equation}\label{eq:model-Hessian}
 D^2_{xx}c_{p,V}(x,x)
 =\frac1p\nabla^2V(x)
  -\frac{p-1}{p}\diag\left(\frac{\partial_iV(x)}{x_i}\right).
\end{equation}
In particular,
\begin{equation}\label{eq:model-matrix}
 \nabla^2V-D^2_{xx}c_{p,V}(x,x)
 =\frac{p-1}{p}
  \left(\nabla^2V+\diag\left(\frac{\partial_iV}{x_i}\right)\right).
\end{equation}
Coordinate hyperplanes have Lebesgue measure zero, so no value of the
ratios there is needed in the integral inequality.

\begin{corollary}\label{cor:mult-BL}
Assume the hypotheses of Corollary~\ref{cor:model-sharp}, and suppose the
second variation is justified as in the preceding subsection, with the
unique contact point $T(y)=y$ almost everywhere. If the matrix in
\eqref{eq:intro-mult-BL} is positive definite, then every admissible even
first variation satisfies \eqref{eq:intro-mult-BL}.
\end{corollary}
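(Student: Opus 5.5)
The plan is to apply Proposition~\ref{prop:local-BL} with $\alpha=1/(p-1)$ and $c=c_{p,V}$, and then read off the matrix from \eqref{eq:model-matrix}.

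First we set up the functional. Corollary~\ref{cor:model-sharp} says that \eqref{eq:main-cost} holds for every even convex coercive potential. Taking logarithms, it reads $J(\phi)\le \frac{p}{p-1}\log\int e^{-V}$, where $J$ is defined in \eqref{eq:J} with $\alpha=1/(p-1)$ (multiply the logarithm of \eqref{eq:main-cost} through appropriately). By \eqref{eq:model-self-dual} we have $V^{c}=(p-1)V$, so equality holds at $\phi=V$. Hence $J$ has a maximum at $V$ along every admissible two-sided family of even convex potentials. This is exactly the setting of the remark after Proposition~\ref{prop:local-BL}.

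Next we check the hypotheses of the proposition. We have $\alpha V^c=V$, so $C=0$ and $\nu=\mu$. The contact point is $T(y)=y$ almost everywhere. This is assumed, and it is also consistent with \eqref{eq:model-support}: equality holds there at $x=y$, so $y$ is a maximizer in the supremum defining $V^c(y)$. The positivity $M(y)>0$ is the assumed positive definiteness. Indeed, by \eqref{eq:model-matrix}, $M(y)$ equals $\frac{p-1}{p}$ times the matrix in \eqref{eq:intro-mult-BL}. This identification is valid on the open orthants, and the coordinate hyperplanes are Lebesgue-null. Also, near the diagonal the signs of $x$ and $y$ agree, so $c_{p,V}$ is given by the smooth geometric-product formula, and the Hessian computation \eqref{eq:model-Hessian} applies.

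Finally we substitute into \eqref{eq:local-BL}. There $1/(1+\alpha)=(p-1)/p$, while $(\nabla^2V-D^2_{xx}c)^{-1}=\frac{p}{p-1}\bigl(\nabla^2V+\diag(\partial_iV/x_i)\bigr)^{-1}$. The two factors cancel, which gives exactly \eqref{eq:intro-mult-BL}.

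The main obstacle is the bookkeeping in the first step. Admissibility means the family stays even and convex for both signs of $t$ and satisfies the differentiability assumptions. This is precisely what ``admissible'' encodes in the statement, so we only need to verify that the evenness of $V$ and of the variation keeps the family inside the class on which Corollary~\ref{cor:model-sharp} applies.
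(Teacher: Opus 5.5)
Your proposal is correct and follows the paper's own proof. You take $\alpha=1/(p-1)$ and use $V^{c_{p,V}}=(p-1)V$ to get $\alpha V^c=V$, so that Corollary~\ref{cor:model-sharp} makes $V$ a maximizer of $J$; you then substitute \eqref{eq:model-matrix} into \eqref{eq:local-BL} and cancel the factors $(p-1)/p$. Your extra checks only spell out what the paper leaves implicit: that $T=\Id$ is consistent with equality in \eqref{eq:model-support}, and that $M>0$ is equivalent to positive definiteness of the matrix in \eqref{eq:intro-mult-BL}.
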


\begin{proof}
Take $\alpha=1/(p-1)$. By \eqref{eq:model-self-dual},
$\alpha V^c=V$, and the cost inequality makes $V$ a maximizer of $J$.
Substitute \eqref{eq:model-matrix} in \eqref{eq:local-BL}. The factors
$(p-1)/p$ cancel.
\end{proof}

There is also a direct proof for unconditional test functions. On the
positive orthant, set $x_i=e^{u_i}$. The probability measure conditioned
on that orthant becomes a measure with potential
\[
 U(u)=V(e^{u_1},\ldots,e^{u_n})-\sum_i u_i
\]
up to an additive constant. If $D=\diag(x_1,\ldots,x_n)$, then
\[
 \nabla^2U
 =D\left(\nabla^2V+\diag\left(\frac{\partial_iV}{x_i}\right)\right)D.
\]
For $F(u)=f(e^u)$, we have $\nabla F=D\nabla f$. The ordinary
Brascamp--Lieb inequality for $U$ is therefore exactly
\eqref{eq:intro-mult-BL} on this orthant. If $f$ is unconditional, its
means and second moments are the same in every orthant, which proves the
full inequality. For a merely even function, the orthant means need not
agree; this explains why the logarithmic change of variables alone is
insufficient.

\begin{example}\label{ex:coordinate-power}
For $V_r(x)=\frac1r\sum_i|x_i|^r$, $r>1$,
\[
 \nabla^2V_r+\diag\left(\frac{\partial_iV_r}{x_i}\right)
 =r\,\diag(|x_i|^{r-2}).
\]
Thus the inequality takes the form
\begin{equation}\label{eq:coordinate-power}
 \Var_\mu(f)\le\frac1r\int\sum_i|x_i|^{2-r}(\partial_i f)^2\,d\mu,
 \qquad d\mu\propto e^{-\sum_i|x_i|^r/r}\,dx.
\end{equation}
For unconditional functions this follows for every $r>1$ from the
preceding argument. For $r\ge2$, the even-function inequality is also a
consequence of the strong Brascamp--Lieb inequality in \cite{CKLR}.
At $r=2$ it is the familiar Gaussian inequality
$\Var_\gamma(f)\le\frac12\int|\nabla f|^2\,d\gamma$ for even functions.
\end{example}

\section{An \texorpdfstring{$L^p$}{Lp} Fradelizi--Meyer theorem and radial cost inequalities}
\label{sec:p-FM}

The next argument starts from a weighted volume-product inequality.
It is independent of the Steiner condition on the cost.

\subsection{The set hypothesis and a polar-coordinate lemma}

Let $p>1$, and let $V:\R^n\to[0,\infty)$ be even, strictly convex,
$C^1$, and $p$-homogeneous, with $V(x)>0$ for $x\ne0$. Write
\[
 L=\{V\le1/p\},\qquad \|x\|=(pV(x))^{1/p}.
\]
Then $\|\cdot\|$ is a norm. These assumptions ensure that
$\nabla V$ is a bijection with inverse $\nabla V^*$, where $V^*$ is the
Legendre transform. The inverse is $1/(p-1)$-homogeneous.

We assume that every symmetric convex body $K$ satisfies
\begin{equation}\label{eq:set-santalo}
 |K|\,|\nabla V^*(K^\circ)|^{p-1}\le |L|^p.
\end{equation}
Here $\nabla V^*(K^\circ)$ denotes the image of the set, which need not
be convex. An assumption for convex bodies is enough: replacing a
symmetric bounded star-shaped set by its closed convex hull preserves
its polar and increases its volume.

For any nonnegative measurable $\rho$, polar coordinates give
\begin{equation}\label{eq:polar-normalization}
 \int_{\R^n}\rho(pV(x))\,dx
 =n|L|\int_0^\infty t^{n-1}\rho(t^p)\,dt.
\end{equation}
Indeed, setting $x=r\theta$ and then
$t=(pV(\theta))^{1/p}r$ gives the factor
$(pV(\theta))^{-n/p}$, whose integral over the sphere is $n|L|$.

\subsection{The \texorpdfstring{$L^p$}{Lp} Fradelizi--Meyer theorem}

\begin{theorem}\label{thm:FM}
Suppose \eqref{eq:set-santalo} holds. Let $\rho$ be nonnegative and
measurable, and let $f,g$ be nonnegative integrable functions, with $f$
even. If
\begin{equation}\label{eq:FM-hyp}
 f(x)g(y)^{p-1}\le\rho(\ip{x}{\nabla V(y)})^p
 \quad\text{whenever }\ip{x}{\nabla V(y)}>0,
\end{equation}
then
\begin{equation}\label{eq:FM}
 \int f\left(\int g\right)^{p-1}
 \le\left(\int\rho(pV(x))\,dx\right)^p.
\end{equation}
\end{theorem}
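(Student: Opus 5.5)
The plan is the classical Fradelizi--Meyer scheme: pass from $f,g$ to their superlevel sets, apply the set inequality \eqref{eq:set-santalo} to the sets obtained, and then close the argument with a one-dimensional multiplicative Pr\'ekopa--Leindler inequality carrying the weight $t^{n-1}$. The weight comes from \eqref{eq:polar-normalization}, which turns the right-hand side of \eqref{eq:FM} into $\bigl(n|L|\int_0^\infty t^{n-1}\rho(t^p)\,dt\bigr)^p$.

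\textbf{Step 1 (geometric step).} We may assume $\int f,\int g>0$. Fix levels $a,b>0$, and set
\[
 K_a=\overline{\operatorname{conv}}\{f>a\},\qquad G_b=\{g>b\}.
\]
Since $f$ is even, $K_a$ is symmetric. If it is bounded, it is a symmetric convex body, or a lower-dimensional set, which is harmless. By \eqref{eq:FM-hyp}, for $x\in\{f>a\}$ and $y\in G_b$ with $\ip{x}{\nabla V(y)}>0$ we get $\rho(\ip{x}{\nabla V(y)})^p>ab^{p-1}$. Because $f$ is even, we can always replace $x$ by $-x$ to make the pairing positive. Hence, if $\ip{x}{\nabla V(y)}\le\lambda$ on $\{f>a\}$, then $\nabla V(y)\in\lambda K_a^\circ$. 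By the $1/(p-1)$-homogeneity of $\nabla V^*$, this gives
\[
 y\in\lambda^{1/(p-1)}\,\nabla V^*(K_a^\circ).
\]
Combining this with \eqref{eq:set-santalo} yields a bound of the form
\[
 |\{f>a\}|\,|G_b|^{p-1}\le|L|^p\,\lambda^{n}.
\]

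\textbf{Step 2 (choice of $\lambda$).} I would choose $\lambda$ through the radial function of $K_a$ in each direction $\theta$, and work ray by ray in polar coordinates adapted to $\|\cdot\|$. The natural choice is to set $\lambda=\lambda(a,b)$ equal to the length, or more precisely the ``$t^{n}$-mass'', of the set $\{\tau>0:\rho(\tau)^p>ab^{p-1}\}$. The aim is a relation of the type
\[
 \Phi(a)\,\Psi(b)^{p-1}\le\bigl(\text{mass of }\{\tau:\rho(\tau^p)^p>ab^{p-1}\}\text{ against }n|L|\tau^{n-1}d\tau\bigr)^p,
\]
where $\Phi(a)=|\{f>a\}|$ and $\Psi(b)=|\{g>b\}|$.

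\textbf{Step 3 (one-dimensional step).} Integrate Step 2 using the layer-cake formula, $\int f=\int_0^\infty\Phi$ and $\int g=\int_0^\infty\Psi$. I would then apply the one-dimensional multiplicative Pr\'ekopa--Leindler inequality with exponents $1/p$ and $(p-1)/p$, in the logarithmic variables $a=e^{u}$, $b=e^{v}$, exactly as in the proof of Theorem~\ref{thm:orthant} but in dimension one. The right-hand side then becomes the distribution of $\rho(t^p)^p$ against $t^{n-1}dt$, which reassembles into $\int\rho(pV)$ by \eqref{eq:polar-normalization}.

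\textbf{Main obstacle.} The hard part is that $\rho$ is not assumed monotone, so the set $\{\tau:\rho(\tau)>s\}$ need not be an interval, and the naive choice of $\lambda$ in Step 2 as a supremum loses information. My first attempt would be to exploit evenness of $f$ more strongly: work ray by ray, pairing $x$ and $y$ on each ray, and apply the one-dimensional inequality on each ray in the variable $t$ before integrating over directions. In that form only the distribution of the values of $\rho$ matters, via a rearrangement-free multiplicative Pr\'ekopa--Leindler along $t\mapsto t^p$. If this fails, the fallback is to prove the statement first for nonincreasing $\rho$ by the argument above, and then reduce to that case by a separate argument. That reduction would compare the hypothesis for $\rho$ with the hypothesis for its decreasing rearrangement with respect to $t^{n-1}dt$, using that the conclusion depends only on this distribution.
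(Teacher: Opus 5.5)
Your Steps 1--3 prove the theorem only when $\rho$ is nonincreasing. In that case, take $\lambda=\sup\{\tau>0:\rho(\tau)^p>ab^{p-1}\}$ and set $\chi(s)=|\{x:\rho(pV(x))>s\}|$. Your inclusion then gives $|\{f>a\}|\,|\{g>b\}|^{p-1}\le\chi(a^{1/p}b^{(p-1)/p})^p$. The one-dimensional multiplicative Pr\'ekopa--Leindler inequality and the layer-cake formula then yield \eqref{eq:FM}, after truncating $f$ to compact support so that $K_a$ is bounded. This is the level-set transference for nonincreasing profiles mentioned in the introduction, but the theorem is stated for arbitrary measurable $\rho$.

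For general $\rho$, Step 2 fails. The inclusion $\nabla V(y)\in\lambda K_a^\circ$ needs a uniform bound on $\ip{x}{\nabla V(y)}$ over $\{f>a\}$. That bound is the supremum of a superlevel set of $\rho$, which can be infinite even when $\int\rho(pV)<\infty$. The ``$t^n$-mass'' of that set does not bound the pairing at all.

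The rearrangement fallback also fails, because the hypothesis for $\rho$ does not imply the hypothesis for its decreasing rearrangement with the same $f,g$. Take $n=1$, $p=2$, $V(x)=x^2/2$, $\rho=\mathbf 1_{[1,2]}$, $f=\mathbf 1_{\{1\le|x|\le\sqrt2\}}$ and $g=\mathbf 1_{[1,\sqrt2]}$. Every relevant positive pairing $xy$ lies in $[1,2]$, where $\rho=1$. The rearranged profile $\mathbf 1_{[0,(\sqrt2-1)^2]}$ vanishes on that whole range.

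The missing idea is to integrate along rays \emph{before} using \eqref{eq:set-santalo}, so that $\rho$ enters only through an integral. Define star-shaped sets by $r_A(\theta)^n=\int_0^\infty r^{n-1}f(r\theta)\,dr$ and $r_B(\eta)^n=\int_0^\infty s^{n-1}g(s\eta)\,ds$. Then $A$ is symmetric, $|A|=\frac1n\int f$ and $|B|=\frac1n\int g$.

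Fix two directions $\theta,\eta$ with $a=\ip{\theta}{\nabla V(\eta)}>0$. Homogeneity turns \eqref{eq:FM-hyp} into $(r^{n-1}f(r\theta))^{1/p}(s^{n-1}g(s\eta))^{(p-1)/p}\le u^{n-1}\rho(au^p)$, with $u=r^{1/p}s^{(p-1)/p}$. Put $I=\int_0^\infty t^{n-1}\rho(t^p)\,dt$. The one-dimensional multiplicative Pr\'ekopa--Leindler inequality then gives $r_A(\theta)\,r_B(\eta)^{p-1}a\le I^{p/n}$. This works for every measurable $\rho$, because $\int_0^\infty u^{n-1}\rho(au^p)\,du=a^{-n/p}I$.

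Hence $\nabla V(B)\subset I^{p/n}A^\circ$. A single application of \eqref{eq:set-santalo} to $\overline{\operatorname{conv}}A$, together with \eqref{eq:polar-normalization}, finishes the proof. In effect, the reduction you wanted happens at the level of $(f,g)\mapsto(A,B)$: it converts an arbitrary profile into the monotone indicator $\mathbf 1_{[0,I^{p/n}]}$. Your ``ray by ray'' remark points this way, but it becomes an argument only with this radial-function construction and with the pairing of two \emph{different} rays $\theta$ and $\eta$.
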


\begin{proof}
By truncating $f$ and $g$ from above and restricting them to increasing
centered balls, it is enough to prove the inequality for bounded,
compactly supported functions. We may assume that both integrals are
positive and that
\[
 0<I:=\int_0^\infty t^{n-1}\rho(t^p)\,dt<\infty.
\]
If $I=0$, replace $\rho(s)$ by $\rho(s)+\varepsilon e^{-s}$ and let
$\varepsilon\downarrow0$; if $I=\infty$, there is nothing to prove.

Define bounded star-shaped sets $A,B$ through their radial functions:
\[
 r_A(\theta)^n=\int_0^\infty r^{n-1}f(r\theta)\,dr,
 \qquad
 r_B(\eta)^n=\int_0^\infty s^{n-1}g(s\eta)\,ds.
\]
The set $A$ is symmetric, and
\begin{equation}\label{eq:AB-volumes}
 |A|=\frac1n\int f,\qquad |B|=\frac1n\int g.
\end{equation}

Fix $\theta,\eta\in\Sph^{n-1}$ with
$a=\ip{\theta}{\nabla V(\eta)}>0$. The homogeneity of $\nabla V$ turns
\eqref{eq:FM-hyp} into
\[
 \bigl(r^{n-1}f(r\theta)\bigr)^{1/p}
 \bigl(s^{n-1}g(s\eta)\bigr)^{(p-1)/p}
 \le u^{n-1}\rho(au^p),\qquad u=r^{1/p}s^{(p-1)/p}.
\]
The one-dimensional multiplicative Pr\'ekopa--Leindler inequality gives
\[
 r_A(\theta)^{n/p}r_B(\eta)^{n(p-1)/p}
 \le\int_0^\infty u^{n-1}\rho(au^p)\,du
 =a^{-n/p}I.
\]
Equivalently,
\[
 r_A(\theta)\,r_B(\eta)^{p-1}
 \ip{\theta}{\nabla V(\eta)}\le I^{p/n}.
\]
By homogeneity, this means
\[
 \ip{x}{\nabla V(y)}\le I^{p/n}\qquad(x\in A,\ y\in B).
\]
For nonpositive scalar products the same inequality is automatic. Hence
\[
 \nabla V(B)\subset I^{p/n}A^\circ,
 \qquad
 B\subset I^{p/(n(p-1))}\nabla V^*(A^\circ).
\]

Let $K$ be the closed convex hull of $A$. It is a symmetric convex body,
since $A$ is bounded and has positive volume. Also $K^\circ=A^\circ$ and
$|A|\le|K|$. Applying \eqref{eq:set-santalo}, we get
\[
 |A|\,|B|^{p-1}
 \le I^p|K|\,|\nabla V^*(K^\circ)|^{p-1}
 \le I^p|L|^p.
\]
Now use \eqref{eq:AB-volumes} and \eqref{eq:polar-normalization}.
Monotone convergence removes the truncations of $f$ and $g$.
\end{proof}

\begin{corollary}[Set--function equivalence]\label{cor:FM-equivalence}
For a fixed $V$ as above, \eqref{eq:set-santalo} for all symmetric
convex bodies is equivalent to the assertion of Theorem~\ref{thm:FM}
for every nonnegative measurable profile $\rho$.
\end{corollary}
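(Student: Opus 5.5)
One direction is already done: Theorem~\ref{thm:FM} shows that \eqref{eq:set-santalo} implies \eqref{eq:FM} for every measurable $\rho$. So only the converse needs an argument. I will assume \eqref{eq:FM} holds for all admissible $f,g,\rho$ and derive \eqref{eq:set-santalo} by testing it on indicator functions.

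Fix a symmetric convex body $K$. Take $f=1_K$, which is even and integrable, and $g=1_{D}$ with $D=\nabla V^*(K^\circ)$. For $\rho$ take the profile $\rho=1_{[0,1]}$.

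Check the hypothesis \eqref{eq:FM-hyp}. We need $f(x)g(y)^{p-1}\le\rho(\ip{x}{\nabla V(y)})^p$ whenever $\ip{x}{\nabla V(y)}>0$. The left side is nonzero only if $x\in K$ and $y\in D$. In that case $\nabla V(y)\in K^\circ$, since $\nabla V$ and $\nabla V^*$ are inverse bijections. Hence $0<\ip{x}{\nabla V(y)}\le1$, so the right side equals $1$ and the hypothesis holds.

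It remains to make sure $g$ is integrable, i.e.\ that $D$ has finite measure. $K^\circ$ is bounded because $K$ contains a neighbourhood of the origin. $\nabla V^*$ is continuous, being the inverse of the continuous bijection $\nabla V$ and, more directly, the gradient of the $C^1$ convex function $V^*$; in any case it is $1/(p-1)$-homogeneous and locally bounded. So $D$ is bounded. It is also measurable, as the continuous image of a compact set.

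Apply \eqref{eq:FM}. It gives
\[
|K|\,|D|^{p-1}\le\left(\int 1_{[0,1]}(pV(x))\,dx\right)^p=|\{pV\le1\}|^p=|L|^p,
\]
which is exactly \eqref{eq:set-santalo}.

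The main point needing care is matching the boundary condition: $\rho$ must equal $1$ on all of $(0,1]$, which holds for the indicator of the closed interval. Because $\rho$ can be an arbitrary measurable profile, no monotonicity or approximation argument is needed. Note that this $\rho=1_{[0,1]}$ happens to be nonincreasing, which fits the remark in the introduction comparing with \cite{LMR}.
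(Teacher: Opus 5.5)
Your proposal is correct and matches the paper's proof: it uses the same test data $\rho=\mathbf 1_{[0,1]}$, $f=\mathbf 1_K$, $g=\mathbf 1_{\nabla V^*(K^\circ)}$, the same check that $\ip{x}{\nabla V(y)}\le 1$ whenever both indicators are nonzero, and the same identification $\int\rho(pV)=|L|$. Your additional verification that $\nabla V^*(K^\circ)$ is compact, so that $g$ is measurable and integrable, is a detail the paper leaves implicit; it is a welcome addition.
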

\begin{proof}
Only the converse remains. Set $\rho=\mathbf1_{[0,1]}$,
$f=\mathbf1_K$ and $g=\mathbf1_{\nabla V^*(K^\circ)}$.
Whenever both indicators are nonzero,
$\ip{x}{\nabla V(y)}\le1$, so \eqref{eq:FM-hyp} holds.
Also $\int\rho(pV)=|L|$. Thus \eqref{eq:FM} is precisely
\eqref{eq:set-santalo}.
\end{proof}

\begin{proof}[Proof of Theorem~\ref{thm:intro-FM}]
For $V(x)=\|x\|_r^p/p$ with $p\ge r\ge2$, the weighted functional
inequality in \cite[Corollary~1.5]{CKLR}, together with its set
formulation \cite[Corollary~6.4]{CKLR}, gives
\eqref{eq:set-santalo}. Here $L=B_r^n$, and
\[
 \nabla V(y)=\|y\|_r^{p-r}
 (|y_1|^{r-2}y_1,\ldots,|y_n|^{r-2}y_n)
\]
for $y\ne0$, with $\nabla V(0)=0$. Theorem~\ref{thm:FM} applies.
\end{proof}

\subsection{Cost formulation}

Continue to assume \eqref{eq:set-santalo}, with $V(x)=\|x\|^p/p$.
Let $w:[0,\infty)\to\R$ be continuous and nondecreasing, and assume
\begin{equation}\label{eq:w-assumption}
 t\longmapsto w(e^t)\quad\text{is convex},
 \qquad 0<\int e^{-w(\|x\|)}\,dx<\infty.
\end{equation}
Set
\[
 W(x)=w(\|x\|),\qquad u(s)=p\,w(s^{1/p}).
\]
Define the cost on all pairs by
\begin{equation}\label{eq:radial-cost}
 c_u(x,y)=u\bigl(\ip{x}{\nabla V(y)}_+\bigr).
\end{equation}
The positive part gives the value $u(0)=pw(0)$ when the scalar product is
nonpositive. Only the positive-inner-product region is needed to apply
Theorem~\ref{thm:FM}.

\begin{proposition}\label{prop:radial-santalo}
For every even measurable $\phi$ with $0<\int e^{-\phi}<\infty$,
\begin{equation}\label{eq:radial-santalo}
 \int e^{-\phi}
 \left(\int e^{-\phi^{c_u}/(p-1)}\right)^{p-1}
 \le\left(\int e^{-W}\right)^p.
\end{equation}
Moreover, $W^{c_u}=(p-1)W$, so equality holds at $W$.
\end{proposition}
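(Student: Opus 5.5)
The plan is to deduce \eqref{eq:radial-santalo} from Theorem~\ref{thm:FM}. Take
\[
 f=e^{-\phi},\qquad g=e^{-\phi^{c_u}/(p-1)},\qquad \rho(s)=e^{-w(s^{1/p})}.
\]
Then $f$ is even and integrable. The function $g$ is nonnegative, where $g=0$ at points with $\phi^{c_u}=+\infty$. The hypothesis \eqref{eq:FM-hyp} comes straight from the definition of the transform. Whenever $\phi(x)<\infty$ and $\ip{x}{\nabla V(y)}>0$, we have $\phi(x)+\phi^{c_u}(y)\ge c_u(x,y)=p\,w(\ip{x}{\nabla V(y)}^{1/p})$. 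Exponentiating gives
\[
 f(x)g(y)^{p-1}\le \rho(\ip{x}{\nabla V(y)})^p .
\]
The inequality is trivial when $\phi(x)=\infty$.

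On the right-hand side, $\rho(pV(x))=e^{-w(\|x\|)}=e^{-W(x)}$, because $pV(x)=\|x\|^p$. So \eqref{eq:FM} is exactly \eqref{eq:radial-santalo}, and no monotonicity of $\rho$ is needed. Theorem~\ref{thm:FM} as stated asks for $g$ integrable. I would first apply it to $g\wedge N$ restricted to balls of radius $N$; the hypothesis persists since these functions are smaller. Then I would let $N\to\infty$ by monotone convergence, which gives integrability of $g$ as a by-product. Measurability of $\phi^{c_u}$ is assumed, as everywhere in the paper.

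For self-duality, I will show the pointwise bound
\[
 u(\ip{x}{\nabla V(y)}_+)\le w(\|x\|)+(p-1)w(\|y\|),
\]
with equality at $x=y$; taking the supremum in $x$ then gives $W^{c_u}=(p-1)W$. The first ingredient is the duality estimate $\ip{x}{\nabla V(y)}\le\|x\|\,\|y\|^{p-1}$. This holds because $\nabla V(y)$ attains its dual norm on the direction $y$: convexity and $p$-homogeneity give $V(x)\ge V(y)+\ip{\nabla V(y)}{x-y}$ and $\ip{y}{\nabla V(y)}=\|y\|^p$. Applying this with $x$ rescaled to norm $\|y\|$ yields the bound.

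For a positive scalar product, monotonicity of $w$ then gives
\[
 u(\ip{x}{\nabla V(y)})\le p\,w\bigl(\|x\|^{1/p}\|y\|^{(p-1)/p}\bigr).
\]
Convexity of $t\mapsto w(e^t)$ bounds this by $w(\|x\|)+(p-1)w(\|y\|)$ when both norms are positive. The cases where a norm vanishes, or the scalar product is nonpositive, reduce to $u(0)=pw(0)\le w(\|x\|)+(p-1)w(\|y\|)$, which holds since $w$ is nondecreasing. At $x=y$ we get $u(\|y\|^p)=p\,w(\|y\|)=W(y)+(p-1)W(y)$, which is the equality case.

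The main obstacle is the bookkeeping in the reduction to Theorem~\ref{thm:FM}: handling infinite values of $\phi$ and $\phi^{c_u}$, and ensuring integrability through truncation. The duality estimate for $\nabla V$ also needs care, but it is routine once homogeneity is used.
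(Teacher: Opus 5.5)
Your proposal is correct and follows the paper's proof. You apply Theorem~\ref{thm:FM} with $f=e^{-\phi}$, $g=e^{-\phi^{c_u}/(p-1)}$ (truncated) and $\rho(s)=e^{-w(s^{1/p})}=e^{-u(s)/p}$, and you get self-duality from the norm-duality bound, the monotonicity of $w$ and the convexity of $t\mapsto w(e^t)$. The only minor differences are that you derive the duality estimate explicitly from the tangent-plane inequality, and you handle the degenerate cases by monotonicity rather than by continuity.
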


\begin{proof}
Apply Theorem~\ref{thm:FM} with
\[
 \rho(s)=e^{-u(s)/p},\qquad f=e^{-\phi},\qquad
 g=e^{-\phi^{c_u}/(p-1)}.
\]
The hypothesis follows from the definition of the cost transform. If
the integrability of $g$ is not known, first truncate it and then use
monotone convergence. Since $u(pV(x))/p=W(x)$, this proves
\eqref{eq:radial-santalo}.

For self-duality, norm duality gives
\[
 \ip{x}{\nabla V(y)}_+\le\|x\|\,\|y\|^{p-1}.
\]
The convexity of $t\mapsto w(e^t)$ yields, for nonzero $x,y$,
\[
 p\,w(\|x\|^{1/p}\|y\|^{(p-1)/p})
 \le w(\|x\|)+(p-1)w(\|y\|).
\]
Together with monotonicity, this gives
$c_u(x,y)\le W(x)+(p-1)W(y)$. Continuity gives the same inequality when
one vector vanishes. Equality holds at $x=y$, because
$\ip{y}{\nabla V(y)}=pV(y)=\|y\|^p$. This proves the claim.
\end{proof}

\subsection{Differentiating the radial cost inequality}

Assume now that $V,W,u$ are $C^2$ away from the origin. We use even
perturbations for which the contact point is unique, the maximizing
branch is differentiable, and the two integral derivatives are justified,
as in Section~\ref{sec:differentiation-BL}. Here the cost inequality holds for all even measurable potentials, so
there is no convexity restriction on the perturbed potential.

Near $x=y\ne0$, the scalar product is positive and
\[
 D^2_{xx}c_u(x,y)
 =u''(\ip{x}{\nabla V(y)})\,
     \nabla V(y)\otimes\nabla V(y).
\]
Since $W=u(pV)/p$,
\[
 \nabla^2W=p\,u''(pV)\nabla V\otimes\nabla V+u'(pV)\nabla^2V.
\]
It follows that
\begin{equation}\label{eq:radial-matrix}
 \nabla^2W-D^2_{xx}c_u(x,x)
 =\frac{p-1}{p}\nabla^2W+\frac1p u'(pV)\nabla^2V.
\end{equation}
Also,
\[
 u'(pV(x))=\frac{w'(\|x\|)}{\|x\|^{p-1}}.
\]
Applying Proposition~\ref{prop:local-BL} with $\alpha=1/(p-1)$ gives
\begin{equation}\label{eq:radial-BL}
 \Var_\mu(f)\le
 \int\left\langle
 \left(\nabla^2W+
 \frac{w'(\|x\|)}{(p-1)\|x\|^{p-1}}\nabla^2V\right)^{-1}
 \nabla f,\nabla f\right\rangle\,d\mu,
 \qquad d\mu=\frac{e^{-W}}{\int e^{-W}}\,dx,
\end{equation}
for these even first variations, provided the displayed matrix is
positive definite. As before, the inequality extends to the closure of
this class in its $L^2$ and energy norms.

In the Euclidean case $p=2$, $V(x)=|x|^2/2$, this is
\begin{equation}\label{eq:Euclidean-radial-BL}
 \Var_\mu(f)\le
 \int\left\langle
 \left(\nabla^2W+\frac{w'(|x|)}{|x|}\Id\right)^{-1}
 \nabla f,\nabla f\right\rangle\,d\mu.
\end{equation}
This recovers the radial weighted inequality of Cordero-Erausquin and
Rotem \cite{CR} under the regularity assumptions used here. Their work
also treats measures with an additional even convex perturbation of the
radial potential.

For $w(t)=t^p/p$, we have $W=V$ and
$w'(\|x\|)/\|x\|^{p-1}=1$. Thus \eqref{eq:radial-BL} reduces to
\[
 \Var_\mu(f)\le\frac{p-1}{p}
 \int\ip{(\nabla^2V)^{-1}\nabla f}{\nabla f}\,d\mu,
 \qquad d\mu\propto e^{-V}\,dx.
\]

\subsection{Dilation equality and transport}

Write $R=\|x\|$, and denote the matrix in \eqref{eq:radial-BL} by
\begin{equation}\label{eq:MW}
 M_W=\nabla^2W+
 \frac{w'(R)}{(p-1)R^{p-1}}\nabla^2V.
\end{equation}
The costs \eqref{eq:radial-cost} satisfy
$c_u(e^t x,e^{-t/(p-1)}y)=c_u(x,y)$. Consequently, for
$\phi_t(x)=W(e^t x)$,
\[
 \phi_t^{c_u}(y)=(p-1)W(e^{-t/(p-1)}y).
\]
The two changes of variables show that equality in
\eqref{eq:radial-santalo} holds along this whole family. Whenever its
second variation is justified, its tangent
\begin{equation}\label{eq:dilation-score}
 G(x)=\ip{x}{\nabla W(x)}=Rw'(R)
\end{equation}
is therefore an equality direction for \eqref{eq:radial-BL}.
Euler's identity gives a useful pointwise version:
\begin{equation}\label{eq:score-matrix}
 M_Wx=\nabla^2W\,x+\nabla W=\nabla G.
\end{equation}

For probability measures $\lambda,\mu$, write
$D_{\mathrm{KL}}(\lambda\|\mu)=\int\log(d\lambda/d\mu)\,d\lambda$
when $\lambda\ll\mu$, and $+\infty$ otherwise.

\begin{proposition}\label{prop:radial-transport}
Under the assumptions of Proposition~\ref{prop:radial-santalo}, put
$d\mu=Z^{-1}e^{-W}dx$ and
\[
 \mathsf D(x,y)=W(x)+(p-1)W(y)-c_u(x,y).
\]
Then $\mathsf D\ge0$, and for even probability measures $\lambda,\eta$,
\begin{equation}\label{eq:radial-transport}
 \inf_{\pi\in\Pi(\lambda,\eta)}\int\mathsf D\,d\pi
 \le D_{\mathrm{KL}}(\lambda\|\mu)
       +(p-1)D_{\mathrm{KL}}(\eta\|\mu).
\end{equation}
\end{proposition}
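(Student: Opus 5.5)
The nonnegativity $\mathsf D\ge0$ is the pointwise inequality $c_u(x,y)\le W(x)+(p-1)W(y)$, already established in the proof of Proposition~\ref{prop:radial-santalo}. For \eqref{eq:radial-transport} I would dualize the functional inequality \eqref{eq:radial-santalo} in the standard Kantorovich/Gibbs way.

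Fix even probability measures $\lambda,\eta$, both with finite relative entropy with respect to $\mu$; otherwise there is nothing to prove. Kantorovich duality for the nonnegative lower semicontinuous cost $\mathsf D$ (continuity of $w$ and of $\ip{x}{\nabla V(y)}_+$ give continuity) expresses the left side as the supremum of $\int a\,d\lambda+\int b\,d\eta$ over bounded continuous pairs with $a(x)+b(y)\le\mathsf D(x,y)$. Since $\lambda,\eta$ are even and $\mathsf D(-x,-y)=\mathsf D(x,y)$, averaging $(a,b)$ with $(a(-\cdot),b(-\cdot))$ preserves admissibility and the value. So it suffices to treat even pairs.

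Given an even admissible pair, set $\phi=W-a$. The constraint reads $c_u(x,y)-\phi(x)\le(p-1)W(y)-b(y)$. Hence $\phi^{c_u}\le(p-1)W-b$, and $\phi$ is even and measurable with $0<\int e^{-\phi}<\infty$ since $a$ is bounded. Proposition~\ref{prop:radial-santalo} then gives
\[
 \int e^{-\phi}\Bigl(\int e^{-\phi^{c_u}/(p-1)}\Bigr)^{p-1}\le Z^p .
\]
Using $e^{-\phi^{c_u}/(p-1)}\ge e^{-W+b/(p-1)}$, this becomes
\[
 \int e^{a}\,d\mu\Bigl(\int e^{b/(p-1)}\,d\mu\Bigr)^{p-1}\le1 .
\]
The Donsker--Varadhan (Gibbs) variational inequality gives $\int a\,d\lambda\le D_{\mathrm{KL}}(\lambda\|\mu)+\log\int e^{a}\,d\mu$. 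Applying the same inequality to $b/(p-1)$ and multiplying by $(p-1)$ yields
\[
 \int b\,d\eta\le(p-1)D_{\mathrm{KL}}(\eta\|\mu)+(p-1)\log\int e^{b/(p-1)}d\mu .
\]
Adding the two bounds, the logarithmic terms sum to a nonpositive quantity by the previous display. Taking the supremum over even pairs then proves the claim.

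The main obstacle is the duality step and the measurability of $\phi^{c_u}$. I would invoke Kantorovich duality for lower semicontinuous costs bounded below, which needs no moment assumptions and allows a value of $+\infty$. The symmetrization of dual pairs is routine. Measurability of the transform can be avoided entirely: the proof of Theorem~\ref{thm:FM} only needs the pointwise hypothesis \eqref{eq:FM-hyp}. So one can apply Theorem~\ref{thm:FM} directly with $f=e^{-\phi}$, $g=e^{-W+b/(p-1)}$ and $\rho=e^{-u/p}$, where \eqref{eq:FM-hyp} follows immediately from $a(x)+b(y)\le\mathsf D(x,y)$.
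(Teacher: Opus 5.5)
Your proposal is correct and follows the paper's proof essentially step for step. The steps match: nonnegativity from the self-duality inequality, then Kantorovich duality with even parts of bounded continuous dual pairs, then the choice $\phi=W-a$, $\psi=(p-1)W-b$ with $\psi\ge\phi^{c_u}$, then Proposition~\ref{prop:radial-santalo}, and finally the entropy variational inequality applied to $a$ and $b/(p-1)$. Your closing remark is a valid small refinement. Applying Theorem~\ref{thm:FM} directly to $f=e^{-\phi}$, $g=e^{-W+b/(p-1)}$, $\rho=e^{-u/p}$ avoids any measurability assumption on $\phi^{c_u}$, whereas the paper relies on its standing convention that transforms are measurable.
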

\begin{proof}
Nonnegativity is the pointwise inequality in the self-duality proof.
The cost $\mathsf D$ is continuous and invariant under
$(x,y)\mapsto(-x,-y)$. In Kantorovich duality, replace bounded
continuous admissible functions $a,b$ by their even parts. Their
integrals do not change, and $a(x)+b(y)\le\mathsf D(x,y)$ is preserved.
Take $\phi=W-a$ and $\psi=(p-1)W-b$. Then
$\phi+\psi\ge c_u$, so $\psi\ge\phi^{c_u}$ and
Proposition~\ref{prop:radial-santalo} yields
\[
 \log\int e^a\,d\mu+(p-1)\log\int e^{b/(p-1)}\,d\mu\le0.
\]
The entropy variational inequality applied to $a$ and $b/(p-1)$
bounds $\int a\,d\lambda+\int b\,d\eta$ by the right-hand side of
\eqref{eq:radial-transport}. Taking the dual supremum proves the claim.
\end{proof}

This is the usual functional-to-transport duality in the present
asymmetric normalization; compare \cite{Fathi,FGSZ,LMR}. Locally,
$D^2_{xx}\mathsf D(x,x)=\frac{p-1}{p}M_W(x)$, so the global transport
cost and the variance estimate determine the same quadratic form,
up to the normalization already present in the second variation.

\section{Examples and sharpness}\label{sec:examples}

In the examples below, a variance inequality obtained from
\eqref{eq:radial-BL} is asserted for its admissible even first variations
and their closure in $L^2$ and in the displayed energy norm. Values of
the coefficient matrices on coordinate hyperplanes are immaterial.
For singular weights, almost-everywhere invertibility alone does not
assert density of an arbitrary smooth test class. The equality
functions below arise from the dilation family just described and
have finite variance and energy.

\subsection{Elliptic Gaussian and bilinear costs}

The diagonal restriction in Corollary~\ref{cor:quadratic} belongs to
its coordinate proof, rather than to the inequality. In fact, if
$A$ is any invertible real matrix and $c_A(x,y)=\ip{x}{Ay}$, then
$\phi^{c_A}(y)=\phi^*(Ay)$ and the classical functional Santal\'o
inequality gives
\[
 \int e^{-\phi(x)}dx\int e^{-\phi^{c_A}(y)}dy
 \le\frac{(2\pi)^n}{|\det A|}
\]
for every even measurable $\phi$ for which the primal integral is
positive and finite. For $A=A^T>0$, the potential
$V(x)=\frac12\ip{Ax}{x}$ is self-dual. Its second variation gives
\[
 \Var_{\gamma_A}(f)\le\frac12\int\ip{A^{-1}\nabla f}{\nabla f}
 \,d\gamma_A,\qquad d\gamma_A\propto e^{-\ip{Ax}{x}/2}dx.
\]
This is also the usual even Gaussian Poincar\'e inequality after a
linear change of variables. Centered quadratic functions attain equality.

\subsection{Anisotropic power densities}

\begin{proposition}\label{prop:power-example}
Let $r\ge2$, $q>0$, $R=\|x\|_r$ and
$d\mu_{r,q}=Z^{-1}e^{-R^q/q}dx$. For the admissible even functions
specified above,
\begin{equation}\label{eq:power-example}
 \Var_{\mu_{r,q}}(f)\le
 \int\left[\frac{R^{r-q}}r\sum_i|x_i|^{2-r}(\partial_i f)^2
 +\frac{r-q}{rq}R^{-q}\ip{x}{\nabla f}^{2}\right]d\mu_{r,q}.
\end{equation}
The bound has equality for $f=R^q$.
\end{proposition}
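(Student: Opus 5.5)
The plan is to obtain \eqref{eq:power-example} as the special case $p=r$, $V(x)=\|x\|_r^r/r$, $w(t)=t^q/q$ of the radial Brascamp--Lieb inequality \eqref{eq:radial-BL}, and then to invert the matrix $M_W$ of \eqref{eq:MW} explicitly.

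\textbf{Checking the hypotheses.} With $p=r\ge2$ we are in the range $p\ge r\ge2$ of Theorem~\ref{thm:intro-FM}. So the set inequality \eqref{eq:set-santalo} holds for $V=\|x\|_r^r/r$, by the CKLR input used in its proof. The norm is $\|x\|=\|x\|_r=R$. Next, $w(t)=t^q/q$ is continuous and nondecreasing. The map $t\mapsto w(e^t)=e^{qt}/q$ is convex, and $0<\int e^{-R^q/q}<\infty$, so \eqref{eq:w-assumption} holds. Then $W=R^q/q$, $\mu=\mu_{r,q}$, and Proposition~\ref{prop:radial-santalo} together with the differentiation subsection give \eqref{eq:radial-BL} for admissible even first variations. $V$ is $C^2$ off the coordinate hyperplanes, which is all that is needed, since values there are immaterial.

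\textbf{Computing $M_W$.} Off the coordinate hyperplanes, write $D=\diag(|x_i|^{r-2})$ and $v=(|x_i|^{r-2}x_i)_i=\nabla V$. Then:
\begin{itemize}
\item $\nabla^2V=(r-1)D$ and $\nabla R=R^{1-r}v$;
\item $\nabla W=R^{q-r}v$, so $\nabla^2W=(r-1)R^{q-r}D+(q-r)R^{q-2r}v\otimes v$;
\item the coefficient in \eqref{eq:MW} is $w'(R)/((r-1)R^{r-1})=R^{q-r}/(r-1)$.
\end{itemize}
Adding these,
\[
 M_W=R^{q-r}\bigl(rD+\beta\,v\otimes v\bigr),\qquad \beta=(q-r)R^{-r}.
\]

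\textbf{Inverting $M_W$.} I will use Sherman--Morrison, together with the identities $D^{-1}v=x$ and $\ip{v}{D^{-1}v}=\ip{v}{x}=R^r$. The denominator is $1+\beta R^r/r=q/r$, which is positive because $q>0$; this also shows $M_W$ is positive definite. The result is
\[
 M_W^{-1}=\frac{R^{r-q}}r D^{-1}+\frac{r-q}{rq}R^{-q}\,x\otimes x.
\]
Pairing with $\nabla f$ reproduces the integrand of \eqref{eq:power-example}, since $\ip{D^{-1}\nabla f}{\nabla f}=\sum_i|x_i|^{2-r}(\partial_if)^2$.

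\textbf{Equality case.} This comes from the dilation score \eqref{eq:dilation-score}: $G=Rw'(R)=R^q$. By \eqref{eq:score-matrix}, $\nabla G=M_Wx$, so the energy is $\ip{M_W^{-1}\nabla G}{\nabla G}=\ip{M_Wx}{x}$, which one checks equals $qR^q$. The variance is $\Var_\mu(R^q)$. Passing to polar coordinates with respect to $\|\cdot\|_r$, as in \eqref{eq:polar-normalization}, the law of $s=R^q/q$ is a Gamma law with shape $n/q$, so
\[
 \Var_\mu(R^q)=q^2\cdot\frac nq=nq .
\]
On the energy side, $\ip{M_Wx}{x}=R^{q-r}\bigl(rR^r+(q-r)R^r\bigr)=qR^q$, and $\int R^q\,d\mu=q\cdot\frac nq=n$, so the energy is $qn$. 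The two sides agree, so equality holds. This direct check avoids relying on justifying the second variation along the dilation family.

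\textbf{Main obstacle.} The algebra is routine. The delicate point is admissibility: the singular weight $|x_i|^{2-r}$, the behaviour near the origin when $q<r$, and whether the second variation is justified. I would state the result only for the admissible class and its closure, as the section's preamble allows, and verify directly that $R^q$ has finite variance and energy.
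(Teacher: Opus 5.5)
Your proposal is correct and follows the paper's proof essentially step for step. Both use the specialization $p=r$, $V=R^r/r$, $w(t)=t^q/q$ in \eqref{eq:radial-BL}, arrive at the same rank-one form of $M_W$ and its inverse, and show by the same Gamma-law computation that $R^q$ has variance and energy both equal to $nq$; your Sherman--Morrison positivity criterion is equivalent to the paper's eigenvalue count ($r$ and $q$ after conjugating by $D^{-1/2}$).
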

\begin{proof}
Use $p=r$, $V=R^r/r$ and $w(t)=t^q/q$ in Section~\ref{sec:p-FM}.
The set hypothesis follows from \cite{CKLR}. The function
$w(e^t)=e^{qt}/q$ is strictly convex for every $q>0$.
On the open orthants, put
\[
 D=\diag(|x_i|^{r-2}),\qquad a=(|x_i|^{r-2}x_i)_i.
\]
Direct differentiation gives
\[
 M_W=rR^{q-r}D+(q-r)R^{q-2r}a\otimes a.
\]
Since $D^{-1}a=x$ and $\ip{D^{-1}a}{a}=R^r$, its inverse is
\[
 M_W^{-1}=\frac{R^{r-q}}rD^{-1}
             +\frac{r-q}{rq}R^{-q}x\otimes x.
\]
The matrix is positive definite: after conjugation by $D^{-1/2}$
and removal of the positive factor $R^{q-r}$, its eigenvalues are
$r$ on the orthogonal complement of $D^{-1/2}a$ and $q$ in that
direction. Formula \eqref{eq:radial-BL} proves the inequality.

Polar coordinates give $R^q/q\sim\operatorname{Gamma}(n/q,1)$,
where the second parameter is the scale. Hence
$\int R^q\,d\mu_{r,q}=n$ and $\Var_{\mu_{r,q}}(R^q)=nq$.
Here $G=R^q$, and \eqref{eq:score-matrix} shows that its energy is
$\int\ip{x}{\nabla G}\,d\mu_{r,q}=q\int R^q\,d\mu_{r,q}=nq$.
\end{proof}

For $q=r$ this is \eqref{eq:coordinate-power}. For $r=2$ it reads
\[
 \Var_\mu(f)\le\int |x|^{2-q}
 \left(\frac{(\partial_{|x|}f)^2}{q}
       +\frac{|\nabla_{\!\mathrm{tan}}f|^2}{2}\right)d\mu,
\]
where $\nabla_{\!\mathrm{tan}}f$ is the Euclidean tangential component
of $\nabla f$; this case is contained in \cite{CR}.
For $0<q<1$, $W$ is not convex, but the matrix above remains positive.

When $q\ge r$, the matrix estimate refines the scalar strong
Brascamp--Lieb estimate with coefficient $(q-1)/q$. Indeed,
\begin{equation}\label{eq:power-comparison}
 M_W-\frac q{q-1}\nabla^2W
 =\frac{q-r}{q-1}R^{q-2r}(R^rD-a\otimes a)\succeq0.
\end{equation}
The last inequality is weighted Cauchy--Schwarz. If $q>r$ and
$n\ge2$, the difference is positive in transverse directions, with
kernel spanned by $x$ on each open orthant.

\subsection{Anisotropic Cauchy densities}

\begin{proposition}\label{prop:cauchy-example}
Let $r\ge2$, $\beta>n/r$, $R=\|x\|_r$ and
$d\mu_{r,\beta}=Z^{-1}(1+R^r)^{-\beta}dx$. Then
\begin{equation}\label{eq:cauchy-example}
 \Var_{\mu_{r,\beta}}(f)\le\frac1{\beta r^2}
 \int(1+R^r)\left[\sum_i|x_i|^{2-r}(\partial_i f)^2
                   +\ip{x}{\nabla f}^{2}\right]d\mu_{r,\beta}
\end{equation}
for the same admissible class. Equality holds for
$f(x)=R^r/(1+R^r)$.
\end{proposition}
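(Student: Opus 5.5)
The plan is to specialize the radial machinery of Section~\ref{sec:p-FM}, exactly as in the proof of Proposition~\ref{prop:power-example}. I would take $p=r$, $V=R^r/r$ (so $\|x\|=R$ and $L=B_r^n$), and $w(t)=\beta\log(1+t^r)$, so that $W=\beta\log(1+R^r)$ and $\mu_{r,\beta}\propto e^{-W}$. The set hypothesis \eqref{eq:set-santalo} for $p=r\ge2$ comes from \cite{CKLR}, as in the proof of Theorem~\ref{thm:intro-FM}. The assumptions \eqref{eq:w-assumption} have to be checked. First, $w$ is continuous and nondecreasing. Second, $w(e^t)=\beta\log(1+e^{rt})$ is a softplus function, hence convex. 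Third, polar coordinates \eqref{eq:polar-normalization} show that $\int(1+R^r)^{-\beta}<\infty$ exactly when $\beta r>n$, which is the stated condition. Proposition~\ref{prop:radial-santalo} and its second variation \eqref{eq:radial-BL} then apply to the admissible even class.

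The main step is to compute $M_W$ from \eqref{eq:MW} on the open orthants and to invert it. Write $D=\diag(|x_i|^{r-2})$ and $a=\nabla V=(|x_i|^{r-2}x_i)_i$, so that $\nabla^2V=(r-1)D$, $D^{-1}a=x$ and $\ip{D^{-1}a}{a}=R^r$. From $\nabla W=\beta r\,a/(1+R^r)$ we get
\[
 \nabla^2W=\frac{\beta r(r-1)}{1+R^r}D-\frac{\beta r^2}{(1+R^r)^2}\,a\otimes a .
\]
The extra term has coefficient $w'(R)/((r-1)R^{r-1})=\beta r/((r-1)(1+R^r))$. Multiplied by $\nabla^2V$, it contributes $\beta r D/(1+R^r)$. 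Adding the two pieces gives
\[
 M_W=\frac{\beta r^2}{1+R^r}\Bigl(D-\frac{a\otimes a}{1+R^r}\Bigr).
\]
Sherman--Morrison applies because the relevant denominator is $1+R^r-\ip{D^{-1}a}{a}=1>0$. It gives
\[
 M_W^{-1}=\frac{1+R^r}{\beta r^2}\bigl(D^{-1}+x\otimes x\bigr).
\]
This matrix is positive definite, so $M_W$ is as well. Substituting $M_W^{-1}$ into \eqref{eq:radial-BL} yields exactly \eqref{eq:cauchy-example}.

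For equality I would use the dilation tangent \eqref{eq:dilation-score}, namely $G=Rw'(R)=\beta r R^r/(1+R^r)$. It is a constant multiple of the proposed $f$, and both sides of the inequality are quadratic, so it suffices to check $G$. By \eqref{eq:score-matrix}, the energy of $G$ equals $\int\ip{x}{\nabla G}\,d\mu$. Integrating by parts against $e^{-W}$, and using $\ip{x}{\nabla W}=G$, gives two identities:
\[
 \int\ip{x}{\nabla G}\,d\mu=\int G^2\,d\mu-n\int G\,d\mu,
 \qquad
 \int G\,d\mu=n .
\]
The second follows from the same integration by parts applied to the constant function $1$. Together they show that the energy of $G$ is $\int G^2\,d\mu-n^2=\Var_\mu(G)$.

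The only delicate point, and the one I expect to need the most care, is justifying these integrations by parts and the admissibility of $G$. The boundary terms behave like $|x|\,R^{-\beta r}R^{n-1}$, which vanishes in the limit precisely because $\beta r>n$. Finiteness of the variance and energy follows because $G$ is bounded and $\ip{x}{\nabla G}=\beta r^2R^r/(1+R^r)^2$ is bounded. The second-variation framework is only being asserted for the stated admissible class, which the dilation family $W(e^tx)$ belongs to.
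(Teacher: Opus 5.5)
Your proof is correct. The inequality part is exactly the paper's argument: the same choices $p=r$, $V=R^r/r$, $w(t)=\beta\log(1+t^r)$, the same check of \eqref{eq:w-assumption}, and the same formula for $M_W$. The paper simply states $M_W^{-1}$, and your Sherman--Morrison step, with denominator $1+S-\ip{x}{a}=1$, supplies the justification.

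Where you differ is the equality case. The paper identifies the law of $U=R^r/(1+R^r)$ under $\mu_{r,\beta}$ as $\operatorname{Beta}(n/r,\beta-n/r)$. It then computes both sides explicitly, the variance $\alpha(\beta-\alpha)/(\beta^2(\beta+1))$ and, via \eqref{eq:score-matrix}, the energy $\beta^{-1}\int U(1-U)\,d\mu_{r,\beta}$, and observes that they coincide.

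You instead integrate by parts against $e^{-W}$, using $\ip{x}{\nabla W}=G$. This gives $\int\ip{x}{\nabla G}\,d\mu=\int G^2\,d\mu-n\int G\,d\mu$ and $\int G\,d\mu=n$, hence energy equals $\Var_\mu(G)$ without any distributional computation. Your identity is the whole-space case of \eqref{eq:B-second}, where $\log\mu(e^t\R^n)\equiv0$. It therefore applies verbatim to any radial $W$ in this framework once the boundary terms at infinity vanish. For instance, it would also settle the equality case of Proposition~\ref{prop:power-example} without the Gamma computation.

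The paper's route, in exchange, yields the explicit common value of the two sides. Your decay estimate $\rho^{\,n-\beta r}\to0$ is exactly what makes the integration by parts legitimate here.
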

\begin{proof}
Take $p=r$, $V=R^r/r$ and $w(t)=\beta\log(1+t^r)$.
The integrability condition is exactly $\beta>n/r$, and
$w(e^t)=\beta\log(1+e^{rt})$ is strictly convex. With $D,a$ as in
the preceding proof and $S=R^r$,
\[
 M_W=\beta r^2\left(\frac{D}{1+S}
              -\frac{a\otimes a}{(1+S)^2}\right),\qquad
 M_W^{-1}=\frac{1+S}{\beta r^2}(D^{-1}+x\otimes x).
\]
This proves \eqref{eq:cauchy-example} by \eqref{eq:radial-BL}.
For $U=S/(1+S)$, polar coordinates show
$U\sim\operatorname{Beta}(\alpha,\beta-\alpha)$ with $\alpha=n/r$.
Its variance is $\alpha(\beta-\alpha)/(\beta^2(\beta+1))$.
As $G=\beta rU$, \eqref{eq:score-matrix} gives the energy of $U$ as
$\beta^{-1}\int U(1-U)\,d\mu_{r,\beta}$, the same number.
\end{proof}

At $r=2$, \eqref{eq:cauchy-example} is the Cauchy inequality of
\cite{CR}. The equality function is bounded, so no additional
polynomial-moment assumption is needed beyond $\beta>n/r$.

\subsection{The obstruction below exponent two}

\begin{proposition}\label{prop:subcritical-obstruction}
For $n\ge2$ and $1<r<2$, \eqref{eq:coordinate-power} fails on even
functions of finite energy. It also fails for some smooth even functions.
\end{proposition}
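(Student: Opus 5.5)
We look for a counterexample of product form. For $n\ge2$ it is enough to take functions of $(x_1,x_2)$ only. The measure $\mu\propto e^{-\sum_i|x_i|^r/r}dx$ is a product, so the remaining coordinates factor out of both sides of \eqref{eq:coordinate-power}. The idea is to take $f(x)=g(x_1)g(x_2)$ with $g$ \emph{odd}. Then $f$ is even, but it is not unconditional. This is consistent with the paper: unconditional functions satisfy \eqref{eq:coordinate-power} for every $r>1$, so any counterexample must have unequal orthant means.

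\textbf{Reduction to one dimension.} Write $\mu_1\propto e^{-|t|^r/r}dt$, and set $A=\int g^2\,d\mu_1$ and $B=\int|t|^{2-r}g'(t)^2\,d\mu_1$. Since $g$ is odd, $\int f\,d\mu=(\int g\,d\mu_1)^2=0$, so $\Var_\mu(f)=A^2$. The right-hand side of \eqref{eq:coordinate-power} is $\frac1r\cdot 2AB$. Hence the inequality fails precisely when $A>\frac2rB$ for some odd $g$ with $B<\infty$.

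\textbf{Choice of $g$.} We take $g(t)=\operatorname{sgn}(t)|t|^{r-1}$. Then $g'(t)=(r-1)|t|^{r-2}$, so $|t|^{2-r}g'^2=(r-1)^2|t|^{r-2}$. This is integrable at $0$ because $r>1$, so $f$ has finite energy. Both quantities are Gamma integrals, via
\[
 \int_0^\infty t^a e^{-t^r/r}\,dt=r^{(a+1)/r-1}\Gamma\!\left(\tfrac{a+1}{r}\right).
\]
Taking $a=2r-2$ for $A$ and $a=r-2$ for $B$, and using $\Gamma(2-1/r)=(1-1/r)\Gamma(1-1/r)$, we get
\[
 \frac{A}{(2/r)B}=\frac{r}{2(r-1)}.
\]
This ratio exceeds $1$ exactly when $r<2$, which gives the failure for even functions of finite energy. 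The borderline $r=2$ gives ratio $1$. That matches the even Gaussian inequality with constant $1/2$, where $f=x_1x_2$ is an equality case, and it serves as a consistency check on the computation.

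\textbf{Smooth functions and the main obstacle.} The delicate step is passing to smooth even functions, because $g$ is only Hölder at $0$. We will replace $g$ by an odd smooth $g_\delta$ that agrees with $g$ for $|t|\ge\delta$. On $[0,\delta]$ we take $g_\delta$ to be a smoothed version of the linear interpolation $t\mapsto \delta^{r-2}t$, kept odd.

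The energy contribution of $[0,\delta]$ is then of order $\delta^{2(r-2)}\int_0^\delta t^{2-r}\,dt\sim\delta^{r-1}\to0$, since $r>1$. Meanwhile $A_\delta\to A$ by dominated convergence. So $A_\delta>\frac2rB_\delta$ for small $\delta$, and $f_\delta=g_\delta(x_1)g_\delta(x_2)$ is a smooth even counterexample. The growth at infinity is harmless because $\mu$ has all moments; if compact support is desired, a cutoff at large radius handles it.
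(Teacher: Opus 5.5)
Your proposal is correct and follows essentially the same route as the paper: the same odd profile $\operatorname{sgn}(t)|t|^{r-1}$, the same product test function $f(x)=g(x_1)g(x_2)$, the resulting ratio $r/(2(r-1))$, and the same near-zero smoothing with an energy error of order $\delta^{r-1}$. The only cosmetic difference is that you evaluate the one-dimensional moments through Gamma functions. The paper instead uses the integration-by-parts identity $\int|t|^{2r-2}\,d\nu_r=(r-1)\int|t|^{r-2}\,d\nu_r$.
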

\begin{proof}
Let $d\nu_r(t)=z_r^{-1}e^{-|t|^r/r}dt$ and
$h(t)=\operatorname{sgn}(t)|t|^{r-1}$. Integration by parts gives
\[
 \int |t|^{2r-2}d\nu_r=(r-1)\int |t|^{r-2}d\nu_r,
\]
and therefore
\[
 \frac1r\int |t|^{2-r}h'(t)^2d\nu_r
 =\frac{r-1}{r}\int h^2d\nu_r.
\]
Both integrals are finite because $r>1$.
For the product measure $\mu_r=\nu_r^{\otimes n}$, take
$f(x)=h(x_1)h(x_2)$. It is even and has mean zero. Independence gives
\[
 \frac1r\int\sum_i|x_i|^{2-r}(\partial_i f)^2d\mu_r
 =\frac{2(r-1)}r\Var_{\mu_r}(f)<\Var_{\mu_r}(f).
\]
To obtain smooth examples, replace $h$ near zero by a smooth odd
interpolation with derivative $O(\varepsilon^{r-2})$ on
$|t|\le2\varepsilon$. The change in weighted energy there is
$O(\varepsilon^{r-1})$. Smooth truncation at infinity converges in
both $L^2(\nu_r)$ and energy. Products of these approximations
preserve evenness and the strict violation.
\end{proof}

This illustrates the difference between central symmetry and
unconditionality: a product of two odd coordinate modes is even but
not unconditional. At $r=2$ the same construction gives the Gaussian
equality mode $x_1x_2$.

\section{Spherical Blaschke--Santal\'o and logarithmic transport}
\label{sec:spherical-BS}

Let $n\ge2$, and equip $\Sph^{n-1}$ with normalized surface measure
$\sigma$. We use the cost
\[
 c(x,y)=
 \begin{cases}
 \log\ip{x}{y},&\ip{x}{y}>0,\\
 -\infty,&\ip{x}{y}\le0,
 \end{cases}
\]
and the transform
\[
 f^c(y)=\sup_{\ip{x}{y}>0}\{\log\ip{x}{y}-f(x)\}.
\]

\begin{theorem}\label{thm:spherical}
Let $f,g:\Sph^{n-1}\to\R\cup\{+\infty\}$ be measurable, with $f$ even,
and suppose
\[
 f(x)+g(y)\ge\log\ip{x}{y}
 \qquad\text{whenever }\ip{x}{y}>0.
\]
If the two integrals are finite, then
\begin{equation}\label{eq:spherical}
 \int e^{-nf}\,d\sigma\int e^{-ng}\,d\sigma\le1.
\end{equation}
In particular,
\begin{equation}\label{eq:spherical-dual}
 \int e^{-nf}\,d\sigma\int e^{-nf^c}\,d\sigma\le1
\end{equation}
whenever the first integral is positive and finite.
\end{theorem}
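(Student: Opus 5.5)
Plan: reduce to the classical Blaschke--Santal\'o inequality \eqref{BS-bodies}, mimicking the star-shaped set argument in the proof of Theorem~\ref{thm:FM} with $p=2$, but with radial functions read directly off the sphere. Define star-shaped sets $A,B\subset\R^n$ by radial functions
\[
 r_A(\theta)=e^{-f(\theta)},\qquad r_B(\eta)=e^{-g(\eta)},
\]
with $r=0$ where the function is $+\infty$. Then $|A|=|B_2^n|\int e^{-nf}\,d\sigma$ and $|B|=|B_2^n|\int e^{-ng}\,d\sigma$ by polar coordinates (using normalized $\sigma$). Since $f$ is even, $A$ is symmetric. The hypothesis $f(x)+g(y)\ge\log\ip{x}{y}$ for $\ip{x}{y}>0$ is exactly $r_A(\theta)r_B(\eta)\ip{\theta}{\eta}\le1$, i.e.\ $\ip{u}{v}\le1$ for all $u\in A$, $v\in B$ (nonpositive products being trivial). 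Hence $B\subset A^\circ$.

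Next, let $K$ be the closed convex hull of $A$. As in the proof of Theorem~\ref{thm:FM}, $K^\circ=A^\circ$ and $|A|\le|K|$, and $K$ is origin-symmetric. Then \eqref{BS-bodies} gives $|A|\,|B|\le|K|\,|K^\circ|\le|B_2^n|^2$, which is \eqref{eq:spherical} after dividing by $|B_2^n|^2$. The dual statement \eqref{eq:spherical-dual} follows by taking $g=f^c$, for which the pointwise hypothesis holds by definition; if $\int e^{-nf^c}d\sigma$ is not known to be finite one truncates $g$ (e.g.\ $\max(g,-N)$, still satisfying the hypothesis) and uses monotone convergence, the bound itself then yielding finiteness.

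The main obstacle is the degenerate and measure-theoretic cases, since $A,B$ need only be measurable star-shaped sets, possibly unbounded or with empty interior. If $\int e^{-nf}d\sigma=0$ the inequality is trivial. If $K$ has empty interior, $A$ lies in a hyperplane and has measure zero, again trivial. If $r_A$ is unbounded (e.g.\ $f$ unbounded below), I would first replace $f$ by $\max(f,-N)$, which is still even and only increases $f$ so the hypothesis persists, prove the inequality for bounded $A$, and let $N\to\infty$ by monotone convergence; the case $K$ unbounded with positive volume is thereby avoided. Measurability of $A$ and $B$ follows from measurability of $f,g$ via polar coordinates. With these reductions the argument is just the set inequality, so no Pr\'ekopa--Leindler step is needed here, in contrast to Theorem~\ref{thm:FM}.
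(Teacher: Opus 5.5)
Your proposal is correct and follows the paper's proof: star-shaped sets with radial functions $e^{-f}$ and $e^{-g}$, the inclusion $B\subset A^\circ$, the closed convex hull $K$ with $K^\circ=A^\circ$, the classical inequality, polar coordinates, and truncation $\max\{f^c,-m\}$ for the transform version. The only difference is in the boundedness of $A$. You obtain it by truncating $f$ from below, whereas the paper notes that $A$ is automatically bounded once $|B|>0$: $A$ is symmetric and $B\subset A^\circ$ contains $n$ linearly independent points $y_i$, so $|\ip{x}{y_i}|\le1$ for all $x\in A$.
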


\begin{proof}
Let $A,B$ be the measurable star-shaped sets with radial functions
$e^{-f}$ and $e^{-g}$. The hypothesis says that
$\ip{x}{y}\le1$ for $x\in A$ and $y\in B$; nonpositive scalar products
require no condition. Thus $B\subset A^\circ$.

We may assume both sets have positive volume. Since $A$ is symmetric,
choosing $n$ linearly independent points of $B$ shows that $A$ is bounded:
every $x\in A$ satisfies $|\ip{x}{y_i}|\le1$ for these points.
Its closed convex hull $K$ is therefore a symmetric convex body, and
$K^\circ=A^\circ$. The classical inequality gives
\[
 |A|\,|B|\le|K|\,|K^\circ|\le|B_2^n|^2.
\]
Polar coordinates yield
\[
 |A|=|B_2^n|\int e^{-nf}\,d\sigma,\qquad
 |B|=|B_2^n|\int e^{-ng}\,d\sigma,
\]
which proves \eqref{eq:spherical}. A zero integral makes the conclusion
immediate. For the transform version, apply the pair inequality to
$g_m=\max\{f^c,-m\}$ and then let $m\to\infty$. This also proves the
integrability of $e^{-nf^c}$.
\end{proof}

For continuous finite $f$, equality in \eqref{eq:spherical-dual} holds
exactly when $e^{-f}$ is the radial function of a centered ellipsoid.
Indeed, equality forces $|A|=|K|$ and equality in the classical volume
product; continuity then identifies $A$ with $K$. Equivalently,
\[
 f(x)=\tfrac12\log\ip{Ax}{x}
\]
for a positive definite matrix $A$. Scaling is included in this formula.

\subsection{Gradient, Hessian and contact map}

Fix $y$ and write $a=\ip{x}{y}>0$. The spherical derivatives of the cost
in its first variable are
\begin{equation}\label{eq:spherical-gradient}
 \gradS^x c(x,y)=\frac{y}{a}-x
\end{equation}
and, for $u,v\in T_x\Sph^{n-1}$,
\begin{equation}\label{eq:spherical-Hessian}
 \operatorname{Hess}_S^x c(x,y)[u,v]
 =-\ip{u}{v}-\frac{\ip{u}{y}\ip{v}{y}}{a^2}.
\end{equation}
The first formula is the tangential projection of $y/a$. For the second,
use $\HessS\ip{x}{y}=-\ip{x}{y}\Id$ and differentiate the logarithm.

For the change-of-variables calculations, assume $f$ is smooth and its
contact map is a diffeomorphism. We also assume that the contact Hessian
\[
 M_f=\Id+\HessS f+\gradS f\otimes\gradS f
\]
is positive definite on each tangent space. These assumptions hold when
$e^{-f}$ is the radial function of a smooth strictly convex body with
positive curvature. The map $T$ below sends a radial direction to its
outward normal, so that the maximizing point in the definition of
$f^c(T(x))$ is $x$.

At a contact pair $y=T(x)$,
\[
 f(x)+f^c(T(x))=\log\ip{x}{T(x)}.
\]
Equation \eqref{eq:spherical-gradient} gives
$\gradS f=T/a-x$. Since $|T|=1$, we obtain
\begin{equation}\label{eq:spherical-map}
 T(x)=\frac{x+\gradS f(x)}{\sqrt{1+|\gradS f(x)|^2}},
 \qquad a=\ip{x}{T(x)}=\frac1{\sqrt{1+|\gradS f(x)|^2}}.
\end{equation}

To compute its Jacobian, put $g=\gradS f$ and let $P_T$ be the orthogonal
projection onto $T_{T(x)}\Sph^{n-1}$. For a tangent vector $v$,
\[
 DT(x)[v]=aP_T\bigl(v+\HessS f[v]+\ip{g}{v}g\bigr).
\]
Indeed, the ambient derivative of $g$ is
$\HessS f[v]-\ip{g}{v}x$, and $P_T(x+g)=0$ gives the displayed form.
The projection from $T_x\Sph^{n-1}$ to $T_{T(x)}\Sph^{n-1}$ has absolute
determinant $a$. As the tangent spaces have dimension $n-1$,
\begin{equation}\label{eq:spherical-Jacobian}
 J_T(x)=a^n\det M_f(x).
\end{equation}

Suppose now that $f$ is a smooth extremizer of
\eqref{eq:spherical-dual}. Write
\[
 A=\int e^{-nf}\,d\sigma,\qquad B=\int e^{-nf^c}\,d\sigma.
\]
The first variation gives
\[
 T_\#\left(\frac{e^{-nf}}{A}\,d\sigma\right)
 =\frac{e^{-nf^c}}{B}\,d\sigma.
\]
Even variations suffice, because the two measures and the contact map
have the corresponding antipodal symmetry. Therefore
\[
 \frac{e^{-nf(x)}}{A}
 =\frac{e^{-nf^c(T(x))}}{B}J_T(x).
\]
Substitute the contact identity and
\eqref{eq:spherical-Jacobian}. The factors $a^n$ cancel, leaving
\begin{equation}\label{eq:spherical-MA}
 \det\bigl(\Id+\HessS f+\gradS f\otimes\gradS f\bigr)
 =\frac BA e^{-2nf}.
\end{equation}

\subsection{Second variation and the even Poincar\'e inequality}

Let $h$ be smooth and even, and define
\[
 F(t)=\int e^{-nth}\,d\sigma\int e^{-n(th)^c}\,d\sigma.
\]
At $t=0$, the maximizing point in
\[
 (th)^c(y)=\sup_{\ip{x}{y}>0}\{\log\ip{x}{y}-th(x)\}
\]
is $x=y$. The Hessian of the cost there is $-\Id$. Differentiating the
contact condition gives $\dot x_0(y)=-\gradS h(y)$, and the envelope
calculation yields
\[
 \left.\frac{d}{dt}\right|_0(th)^c(y)=-h(y),\qquad
 \left.\frac{d^2}{dt^2}\right|_0(th)^c(y)=|\gradS h(y)|^2.
\]
Thus
\[
 (th)^c=-th+\frac{t^2}{2}|\gradS h|^2+o(t^2).
\]
For small $t$ these expansions are uniform on the compact sphere: the
maximizer remains near $x=y$, and the contact Hessian remains negative
definite.

If $A(t)=\int e^{-nth}\,d\sigma$ and
$B(t)=\int e^{-n(th)^c}\,d\sigma$, then $A(0)=B(0)=1$ and
\[
 A'(0)=-n\int h\,d\sigma,\qquad B'(0)=n\int h\,d\sigma,
\]
\[
 A''(0)=n^2\int h^2\,d\sigma,\qquad
 B''(0)=n^2\int h^2\,d\sigma-n\int|\gradS h|^2\,d\sigma.
\]
Consequently,
\begin{equation}\label{eq:spherical-second}
 F''(0)=2n^2\Var_\sigma(h)-n\int|\gradS h|^2\,d\sigma.
\end{equation}
Since $F(t)\le F(0)=1$, we recover the sharp even Poincar\'e inequality
\begin{equation}\label{eq:spherical-Poincare}
 \int|\gradS h|^2\,d\sigma\ge2n\Var_\sigma(h).
\end{equation}
The constant is attained by the restrictions of trace-free quadratic
forms to the sphere.

\begin{proposition}[Deficit transverse to ellipsoids]\label{prop:sphere-local-deficit}
Set
\[
 \delta(f)=-\log\left(\int e^{-nf}d\sigma
                           \int e^{-nf^c}d\sigma\right).
\]
For smooth even $h$,
\[
 \left.\frac{d^2}{dt^2}\right|_0\delta(th)
 =n\int|\gradS h|^2d\sigma-2n^2\Var_\sigma(h).
\]
Its kernel consists of constants and degree-two spherical harmonics.
If $h$ is orthogonal to both these spaces, then
\begin{equation}\label{eq:sphere-transverse}
 \lim_{t\to0}\frac{\delta(th)}{t^2}
 \ge n(n+4)\|h\|_{L^2(\sigma)}^2.
\end{equation}
\end{proposition}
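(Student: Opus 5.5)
The plan is to reduce everything to the second variation already computed in \eqref{eq:spherical-second}, and then diagonalize the resulting quadratic form in spherical harmonics.

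\textbf{Step 1: the second derivative of $\delta(th)$.} Write $\delta(th)=-\log F(t)$, with $F$ as in the preceding subsection. The expansions there give $F(0)=A(0)B(0)=1$. They also give
\[
F'(0)=A'(0)+B'(0)=-n\int h\,d\sigma+n\int h\,d\sigma=0 .
\]
Hence
\[
\left.\frac{d^2}{dt^2}\right|_0\log F(t)=\frac{F''(0)}{F(0)}-\frac{F'(0)^2}{F(0)^2}=F''(0).
\]
Combined with \eqref{eq:spherical-second}, this gives
\[
\left.\frac{d^2}{dt^2}\right|_0\delta(th)=n\int|\gradS h|^2\,d\sigma-2n^2\Var_\sigma(h).
\]
To justify the differentiation, I will rely on the uniformity of the expansion $(th)^c=-th+\frac{t^2}{2}|\gradS h|^2+o(t^2)$ on the compact sphere. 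This uniformity was noted above: the maximizer stays near $x=y$ and the contact Hessian stays negative definite. It makes $A$ and $B$ twice differentiable at $0$, with the stated derivatives.

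\textbf{Step 2: diagonalization.} Expand the even function $h$ in spherical harmonics, $h=\sum_k h_k$. By antipodal symmetry only even degrees $k$ occur. The Laplace--Beltrami eigenvalue of $h_k$ is $k(k+n-2)$, and $\Var_\sigma(h)=\sum_{k\ge1}\|h_k\|^2$. The quadratic form therefore becomes
\[
\sum_{k\ge 2,\ k\text{ even}} n\bigl(k(k+n-2)-2n\bigr)\|h_k\|_{L^2(\sigma)}^2 .
\]
The coefficients are read off as follows.
\begin{itemize}
\item For $k=0$ (constants), both the gradient term and the variance vanish, so constants lie in the kernel.
\item For $k=2$, the coefficient is $n(2n-2n)=0$, so degree-two harmonics lie in the kernel.
\item For even $k\ge4$, we have $k(k+n-2)\ge4(n+2)$. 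The coefficient is therefore at least $n(4n+8-2n)=2n(n+4)>0$.
\end{itemize}
So the kernel is exactly the span of constants and degree-two harmonics. This matches the ellipsoid directions $\frac12\log\ip{Ax}{x}$ to first order.

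\textbf{Step 3: the transverse bound.} If $h$ is orthogonal to constants and to degree-two harmonics, the form is at least $2n(n+4)\|h\|_{L^2(\sigma)}^2$. Since $\delta(0)=0$ and $\frac{d}{dt}\big|_0\delta(th)=0$, Taylor's theorem gives
\[
\lim_{t\to0}\frac{\delta(th)}{t^2}=\frac12\left.\frac{d^2}{dt^2}\right|_0\delta(th)\ge n(n+4)\|h\|_{L^2(\sigma)}^2,
\]
which is \eqref{eq:sphere-transverse}.

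The main point requiring care is the twice differentiability of $B(t)$ at $t=0$, that is, the uniform second-order expansion of $(th)^c$. The preceding subsection supplies this through the implicit function theorem at the nondegenerate contact point $x=y$, whose Hessian is $-\Id$. The remaining steps are the routine harmonic computation above.
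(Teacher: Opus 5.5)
Your proof is correct and follows the paper's argument. You combine \eqref{eq:spherical-second} with $F(0)=1$ and $F'(0)=0$, expand the even $h$ in even-degree spherical harmonics so the form becomes $\sum_{k\ge2\ \mathrm{even}}n\bigl(k(k+n-2)-2n\bigr)\|h_k\|^2$ (constants drop out of both terms), read off the kernel $k\in\{0,2\}$ and the transverse bound $2n(n+4)\|h\|^2$ from $k=4$, and halve it via the second-order Taylor expansion; the only cosmetic point is that Step 1 should state $\delta''(0)=-F''(0)$ explicitly, though your final formula already has the correct sign.
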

\begin{proof}
Use \eqref{eq:spherical-second} and $F(0)=1$, $F'(0)=0$.
The eigenvalues of $-\Delta_S$ are $k(k+n-2)$; evenness allows only
even $k$. The zero modes of the displayed quadratic form are $k=0,2$.
After their removal, the smallest eigenvalue is $4(n+2)$, giving
\eqref{eq:sphere-transverse}. Finally, differentiating
$\frac12\log\ip{(\Id+tA)x}{x}$ at zero identifies the $k=0,2$
modes with infinitesimal ellipsoidal deformations.
\end{proof}

The assertion is a second-order estimate along a fixed smooth
perturbation. A uniform nonlinear stability theorem would additionally
require control of the remainder and a choice of ellipsoidal normalization.

\subsection{Entropy--logdet and logarithmic transport}

Return to a smooth even $f$ with a diffeomorphic contact map and
$M_f>0$, and normalize it by $\int e^{-nf}\,d\sigma=1$. Put
$d\mu=e^{-nf}\,d\sigma$. The contact identity and Jacobian formula give
\[
 \int e^{-nf^c}\,d\sigma
 =\int e^{-nf^c(T(x))}J_T(x)\,d\sigma(x)
 =\int e^{nf}\det M_f\,d\sigma.
\]
By \eqref{eq:spherical-dual}, this is at most one. Equivalently,
\begin{equation}\label{eq:spherical-logdet-integral}
 \int e^{2nf}\det M_f\,d\mu\le1.
\end{equation}
Jensen's inequality then gives
\begin{equation}\label{eq:spherical-logdet}
 \int\bigl(2nf+\log\det M_f\bigr)\,d\mu\le0.
\end{equation}

The Jensen loss admits an exact interpretation. Let
$B=\int e^{-nf^c}d\sigma$ and
$d\nu=B^{-1}e^{-nf^c}d\sigma$. Then
\begin{equation}\label{eq:exact-spherical-deficit}
 2\Ent_\sigma(\mu)-\int\log\det M_f\,d\mu
 =-\log B+D_{\mathrm{KL}}(T_\#\mu\|\nu).
\end{equation}
Here $\Ent_\sigma(\mu)=\int\log(d\mu/d\sigma)d\mu$.
Indeed, the contact identity and \eqref{eq:spherical-Jacobian} give
\[
 \frac{d(T_\#\mu)}{d\nu}(T(x))
 =\frac{B e^{-nf(x)+nf^c(T(x))}}{J_T(x)}
 =\frac{B e^{-2nf(x)}}{\det M_f(x)}.
\]
Integrate the logarithm against $\mu$ and use
$\Ent_\sigma(\mu)=-n\int f\,d\mu$.
Thus the left-hand side of \eqref{eq:exact-spherical-deficit} is the
sum of the Santal\'o deficit and the relative entropy measuring failure
of the extremal transport relation. For instance, Pinsker's inequality
bounds it below by
$-\log B+2\|T_\#\mu-\nu\|_{\mathrm{TV}}^2$, with
$\|\eta-\nu\|_{\mathrm{TV}}=\sup_A|\eta(A)-\nu(A)|$.

We finish with the transport formulation. Set
\[
 d(x,y)=
 \begin{cases}
 \log\dfrac1{\ip{x}{y}},&\ip{x}{y}>0,\\[0.3em]
 +\infty,&\ip{x}{y}\le0,
 \end{cases}
 \qquad
 K(\mu,\nu)=\inf_{\pi\in\Pi(\mu,\nu)}\int d(x,y)\,d\pi(x,y).
\]
Here $\Pi(\mu,\nu)$ denotes the couplings of the two probability measures.
Their entropy relative to $\sigma$ is
\[
 \Ent_\sigma(\mu)=\int\rho\log\rho\,d\sigma
 \quad\text{if }d\mu=\rho\,d\sigma,
\]
and is $+\infty$ otherwise.

\begin{theorem}[Fradelizi--Gozlan--Sadovsky--Zugmeyer]
\label{thm:spherical-transport}
For even probability measures $\mu,\nu$ on $\Sph^{n-1}$,
\begin{equation}\label{eq:spherical-transport}
 K(\mu,\nu)\le\frac1n\Ent_\sigma(\mu)+\frac1n\Ent_\sigma(\nu).
\end{equation}
\end{theorem}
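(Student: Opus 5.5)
The plan is to dualize Theorem~\ref{thm:spherical}, in the same way as the proof of Proposition~\ref{prop:radial-transport}. We may assume both entropies are finite, so that $\mu,\nu\ll\sigma$. The cost $d$ is lower semicontinuous with values in $[0,+\infty]$ (since $\ip{x}{y}\le1$), so Kantorovich duality holds:
\[
 K(\mu,\nu)=\sup\Bigl\{\int a\,d\mu+\int b\,d\nu:\ a,b\in C(\Sph^{n-1}),\ a(x)+b(y)\le d(x,y)\Bigr\}.
\]
Both measures are even and $d(-x,-y)=d(x,y)$. Replacing $a,b$ by their even parts $\tfrac12(a+a(-\cdot))$ and $\tfrac12(b+b(-\cdot))$ preserves the constraint and does not change the two integrals. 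Hence it suffices to bound $\int a\,d\mu+\int b\,d\nu$ for even continuous admissible pairs.

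Fix such a pair and set $f=a/n$, $g=b/n$. The constraint reads $f(x)+g(y)\le-\tfrac1n\log\ip{x}{y}$ when $\ip{x}{y}>0$, which is the wrong form for Theorem~\ref{thm:spherical}. So I will instead put $\tilde f=-a/n$ and $\tilde g=-b/n$. The constraint becomes
\[
 n\tilde f(x)+n\tilde g(y)\ge\log\ip{x}{y}\qquad\text{when }\ip{x}{y}>0.
\]
The homogeneity mismatch is the main obstacle. Theorem~\ref{thm:spherical} needs $F(x)+G(y)\ge\log\ip{x}{y}$ and then bounds $\int e^{-nF}\int e^{-nG}$. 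Here I take $F=n\tilde f=-a$, which gives $\int e^{na}\,d\sigma\int e^{nb}\,d\sigma\le1$. That is the wrong exponent scale: the target has entropies divided by $n$.

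I will therefore rescale the pair instead. Define $F=-a/n$ and $G=-b/n$. To apply the theorem I need $F+G\ge\log\ip{x}{y}$, that is, $a+b\le n\log(1/\ip{x}{y})=nd$. So I run duality with the cost $nd$, which is allowed because $K$ is linear in the cost. Take $a,b$ even and continuous with $a(x)+b(y)\le n\,d(x,y)$. Theorem~\ref{thm:spherical} applied to $F=-a/n$ (even) and $G=-b/n$ gives
\[
 \log\int e^{a}\,d\sigma+\log\int e^{b}\,d\sigma\le0 .
\]
The Donsker--Varadhan variational formula $\int a\,d\mu\le\Ent_\sigma(\mu)+\log\int e^a\,d\sigma$, and the same for $b$ with $\nu$, then yields
\[
 \int a\,d\mu+\int b\,d\nu\le\Ent_\sigma(\mu)+\Ent_\sigma(\nu).
\]
Taking the supremum over even admissible pairs gives $nK(\mu,\nu)\le\Ent_\sigma(\mu)+\Ent_\sigma(\nu)$, which is \eqref{eq:spherical-transport}.

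The remaining technical point is the duality step for a cost that is infinite on $\{\ip{x}{y}\le0\}$. The standard Kantorovich theorem for lower semicontinuous costs bounded below on compact spaces covers this, with continuous (hence bounded) potentials $a,b$, so the integrals in Theorem~\ref{thm:spherical} are finite and positive. If $K=+\infty$ with finite entropies, the dual supremum is also $+\infty$, and the bound above excludes this. So no separate case is needed beyond invoking duality in its extended form. I expect the verification that even potentials suffice in this extended-value duality to be the only delicate point, and I will handle it exactly as in the proof of Proposition~\ref{prop:radial-transport}.
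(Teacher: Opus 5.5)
Your proof is correct and follows the paper's argument: Kantorovich duality, replacing the potentials by their even parts, applying Theorem~\ref{thm:spherical} to the negated potentials, and finishing with the entropy variational inequality. Running duality with the cost $nd$ and potentials $a,b$ is just the paper's choice $f=-u_e$, $g=-v_e$ with $u=a/n$, $v=b/n$.

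The ``homogeneity mismatch'' you worried about is not real. Your first attempt gave $\int e^{na}\,d\sigma\int e^{nb}\,d\sigma\le1$. Applying the variational inequality to $na$ and $nb$ then yields $n\bigl(\int a\,d\mu+\int b\,d\nu\bigr)\le\Ent_\sigma(\mu)+\Ent_\sigma(\nu)$. That is already the target scale, and it is exactly how the paper proceeds, so the rescaling to the cost $nd$ is harmless but unnecessary.
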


This is \cite[Theorem~3.7]{FGSZ}, written on $\Sph^{n-1}$ rather than
$\Sph^n$. We include the short proof from spherical Blaschke--Santal\'o.

\begin{proof}
We may assume both entropies are finite. Kantorovich duality for the
nonnegative lower semicontinuous cost $d$ reduces the assertion to bounding
\[
 \int u\,d\mu+\int v\,d\nu
\]
for bounded continuous functions satisfying $u(x)+v(y)\le d(x,y)$.
Replace them by their even parts
\[
 u_e(x)=\frac{u(x)+u(-x)}2,\qquad
 v_e(y)=\frac{v(y)+v(-y)}2.
\]
The integrals do not change, and the constraint is preserved by averaging
it at $(x,y)$ and $(-x,-y)$. Applying Theorem~\ref{thm:spherical} to
$f=-u_e$ and $g=-v_e$ gives
\[
 \log\int e^{nu_e}\,d\sigma+\log\int e^{nv_e}\,d\sigma\le0.
\]
The entropy variational inequality says
\[
 \int\psi\,d\lambda\le
 \Ent_\sigma(\lambda)+\log\int e^\psi\,d\sigma.
\]
Use it with $\psi=nu_e$, $\lambda=\mu$, and then with $\psi=nv_e$,
$\lambda=\nu$. Adding gives the required bound on the dual expression.
Taking the supremum proves \eqref{eq:spherical-transport}.
\end{proof}

\section{Restrictions and dilation concavity}\label{sec:restriction}

The whole-space estimates above motivate a stronger question. Let
$W=w(R)$ and $M_W$ be as in \eqref{eq:MW}. For a symmetric convex
body $K$ of positive $\mu$-measure, let $\mu_K=\mu(K)^{-1}\mathbf1_K\mu$.
One may ask for which reference potentials the inequality
\begin{equation}\label{eq:restriction-question}
 \Var_{\mu_K}(f)\le
 \int\ip{M_W^{-1}\nabla f}{\nabla f}\,d\mu_K
\end{equation}
holds for every even smooth function of finite energy and every such $K$.
This is a question about the entire even spectrum. In the Euclidean
radial case, \cite{CR} gives the corresponding result, and more generally
allows an additional even convex perturbation of the reference potential.
Our whole-space cost inequality does not itself establish
\eqref{eq:restriction-question} in the anisotropic case.

The relevance of the dilation score $G=\ip{x}{\nabla W}$ is exact.
For $F_K(t)=\log\mu(e^tK)$, change variables to $K$ and differentiate
twice to obtain
\begin{equation}\label{eq:B-second}
 F_K''(0)=\Var_{\mu_K}(G)
             -\int\ip{x}{\nabla G}\,d\mu_K.
\end{equation}
The calculation is valid whenever the two derivatives are integrable;
for the power and Cauchy examples on bounded $K$, this follows directly
from their radial formulas. By \eqref{eq:score-matrix}, the right-hand
side is nonpositive if \eqref{eq:restriction-question} is available
just for $f=G$. Applying this at every dilate gives the scalar $(B)$
property. Conversely, that property gives precisely this one-function
variance estimate, without yielding the full restricted inequality.
For $W=R^q/q$, the one-function estimate is
\begin{equation}\label{eq:restricted-power-moment}
 \Var_{\mu_K}(R^q)\le q\int R^q\,d\mu_K.
\end{equation}

The general scalar $(B)$ assertion for even log-concave measures is
included in the recent OpenAI announcement
\cite{OpenAILogBM,OpenAICollection}. Accordingly, if that announced
result is used, \eqref{eq:restricted-power-moment} already follows
for $q\ge1$ and every norm $R$ by \eqref{eq:B-second}.
It does not supply \eqref{eq:restriction-question} for arbitrary even
$f$, nor does its log-concavity hypothesis cover the anisotropic
$0<q<1$ or Cauchy examples. These distinctions specify the additional
content sought from a restriction theorem. As noted in the introduction,
none of our proofs relies on the announced result.

\end{document}